\documentclass[12pt]{article}
\usepackage[pagebackref,colorlinks=true,urlcolor=blue,linkcolor=red,citecolor=red]{hyperref}
\usepackage{epsfig,graphics, graphicx}
\usepackage{subcaption, comment, bm}
\usepackage[percent]{overpic}
\usepackage{float}
\usepackage{amssymb,bm}
\usepackage{amsthm}
\usepackage{color}
\usepackage[]{amsmath}
\usepackage[]{amsfonts}
\usepackage[]{fancyhdr}
\usepackage[]{graphicx,wrapfig}
\usepackage{enumitem}
\usepackage[utf8]{inputenc}
\usepackage[T1]{fontenc}
\usepackage{mathtools}
\usepackage{array}

 \usepackage{pdfsync,verbatim,stmaryrd}
 \newcommand{\Beq}{\begin{equation}}
 \newcommand{\Eeq}{\end{equation}}
 \newcommand{\beq}{\begin{equation*}}
 \newcommand{\eeq}{\end{equation*}}
 \newcommand{\bal}{\begin{align}}
 \newcommand{\eal}{\end{align}}
 
 \newcommand{\D}{\mathrm{d}}

 \newcommand{\Lc}{\mathcal{L}}
 \newcommand{\Mc}{\mathcal{M}}

 \newcommand{\Rc}{\mathcal{R}}
 \newcommand{\Sc}{\mathcal{S}}
 \newcommand{\Tc}{\mathcal{T}}

 \newcommand{\Zc}{\mathcal{Z}}

 \newcommand{\Rb}{\mathbb{R}}
 \newcommand{\Sb}{\mathbb{S}}

 \renewcommand{\o}{\omega}

 \usepackage{amsfonts}
 \newcommand{\FR}{\mathbb{R}} 

 \DeclareMathAlphabet{\bi}{OML}{cmm}{b}{it}
 \DeclareMathAlphabet{\bcal}{OMS}{cmsy}{b}{n}
 \DeclareMathAlphabet{\brmn}{OT1}{cmr}{bx}{n}
 \usepackage{amsopn}
 \usepackage{amsmath,amsthm,amssymb}

\newtheorem{remark}{Remark}

 \newtheorem{theorem}{Theorem}
 
 \newtheorem{lemma}{Lemma}

 \newtheorem{definition}{Definition}

\title{\vspace{-1cm} Radon Transform over Tensor Fields: Injectivity, Range, and Unique Continuation Principle}

\author{Rohit Kumar Mishra\thanks{\textbf{Corresponding author}, Department of Mathematics, Indian Institute of Technology, Gandhinagar, Gujarat, India. \url{rohit.m@iitgn.ac.in}, \url{rohittifr2011@gmail.com}} \and Chandni Thakkar\thanks{Department of Mathematics, Indian Institute of Technology, Gandhinagar, Gujarat, India. \url{thakkar_chandni@iitgn.ac.in}}}

\begin{document}
\maketitle
\date{}
\begin{abstract}
A central objective in inverse problems arising in integral geometry is to understand the kernel characterization, inversion formulas, stability estimates, range characterization, and unique continuation properties of integral transforms. In this paper, we study all these aspects for Radon transforms acting on symmetric $m$-tensor fields in $\mathbb{R}^n$. Our results show that these transforms admit a coherent analytic structure, extending several key features of the classical Radon transform and tensor ray transforms to a broader geometric setting.
\end{abstract}
\textbf{Keywords: }Generalized Radon transform, $m$-tensor fields, inverse problems 
\vspace{2mm}

\noindent \textbf{Mathematics subject classification 2020:} 44A12, 44A30, 46F12, 47G10
\section{Introduction}\label{Sec:Introduction}

The Radon transform, introduced by Johann Radon in 1917 \cite{Radon_transform}, associates to a function its integrals over affine hyperplanes in $\mathbb{R}^n$. It is a fundamental object in integral geometry and inverse problems. Its main analytic features including kernel characterization, inversion, range description, stability, and support theorems are well understood; see \cite{Helgason_1999}.

A natural extension of this framework is to consider transforms acting on vector and tensor fields. In this setting, new phenomena appear at the level of injectivity and kernel structure. Even for the standard ray transform of symmetric tensor fields, uniqueness fails: the transform annihilates potential fields, and only the solenoidal component can be recovered. This leads to the study of decomposition and injectivity questions, which form the foundation of tensor tomography; see \cite{Sharafutdinov_Book,Sharafutdinov_Saint-Venant}.

A large body of work has been devoted to ray transforms of tensor fields, where the integration is carried along straight lines, and their variants. We cite a few representative works below; for a broader overview, we refer the reader to the references therein. Kernel and injectivity properties, including solenoidal injectivity and support theorems, have been studied in \cite{Sharafutdinov_Book,Sharafutdinov_Novikov_2007,Vertgeim_2000}. The transverse ray transform and its properties have been investigated in \cite{TRT_injectivity,Anuj_TRT_Support,Lionheart_Naeem_2016}, along with results for restricted data problems; see \cite{Rohit_2024}. Mixed ray transforms have also been studied in various settings; see \cite{Joonas_2023,UCP_MiRT}.

Stability estimates for tensor tomography are closely related to identities of Reshetnyak type and related techniques; see \cite{Sharafutdinov_Reshetnyak_formula,Katsevich_Schuster,Holman_2013}. Range characterization problems have been studied for tensor and moment-type transforms in \cite{Rohit_2021,Momentum_ray_transform_2_2020}, with further developments in the analysis of associated normal operators \cite{normal_operator_MRT}. Unique continuation properties for ray transforms have been investigated in \cite{UCP_2022}. Microlocal aspects, describing the propagation of singularities and invertibility at the level of wavefront sets, are studied in \cite{Microlocal_2018,Microlocal_2021,Uhlmann_2008,microlocal_MiRT}. Together, these results show that ray transforms admit a rich and well-developed analytic theory.

In parallel, Radon transforms for tensor fields have recently attracted considerable interest. Such transforms arise naturally in many applications. Compared to the classical Radon transform, these generalized transforms are less studied. Recently, generalized Radon transforms acting on vector and symmetric tensor fields in $\mathbb{R}^n$ have been introduced and inversion formulas have been obtained in \cite{Generalised_Radon_inversion,Polyakova_Svetov_Gen_Rad_tensor_fields, Kunyansky_2023, Polyakova_Svetov_normal_Radon_numerical}. These works indicate that such transforms share several features with ray transforms and classical Radon transform. However, beyond inversion, several fundamental analytic questions remain open.

The purpose of this paper is to develop a systematic study of generalized Radon transforms acting on symmetric $m$-tensor fields in $\mathbb{R}^n$. In particular, we study the following:
\begin{itemize}
    \item kernel characterization and inversion,
    \item stability estimates and range description,
    \item unique continuation results.
\end{itemize}
While each of these aspects is well understood for the classical Radon transform and its various variants, they have not been treated in a unified way for generalized Radon transforms on tensor fields. Our results show that these transforms admit a coherent analytic structure, extending several key features of the classical theory to this broader setting. 

The paper is organized as follows. In Section \ref{section: def and notations}, we introduce the transform and fix the notation. Section \ref{section: kernel and inversion} is devoted to the characterization of the kernel and inversion. In Section \ref{section: Stability and Range}, we study stability estimates and range characterization. Finally, in Section \ref{section: UCP}, we address uniqueness and non-uniqueness for the generalized Radon transform.

\section{Definitions and notations} \label{section: def and notations}
In this section, we present the definitions and notations needed to state the main results and for further discussion in the article. We start by introducing various Sobolev spaces by completing Schwartz spaces with specific norms. We introduce the classical Radon transform and discuss some of its basic properties. Then, we define the generalized Radon transform of symmetric $m$-tensor fields in $\Rb^n$ as an extension of longitudinal and transverse Radon transforms (studied in \cite{Generalised_Radon_inversion}).\\

\noindent For a given $\omega \in \mathbb{S}^{n-1}$ and $p \in \mathbb{R}$, the hyperplane perpendicular to $\omega$ and at a signed distance $p$ from the origin is denoted by $H_{\omega, p}$ and is given by
\[ H_{\omega,p} := \{x\in\mathbb{R}^n : \langle x,\omega\rangle = p\}.\]
\noindent The tangent bundle and the unit tangent bundle of the unit sphere $\mathbb{S}^{n-1}$ are denoted by $T\Sb^{n-1}$ and $\widetilde{T} \mathbb{S}^{n-1}$, respectively. Specifically, these spaces are defined as follows:
\[T\mathbb{S}^{n-1}
:= \{(u,\omega)\in \mathbb{R}^{n}\times\mathbb{S}^{n-1}
: \langle u, \omega\rangle = 0\}\ \mbox{ and }\ 
\widetilde{T} \mathbb{S}^{n-1}
:= \{(u,\omega)\in \mathbb{S}^{n-1}\times\mathbb{S}^{n-1}
: \langle \omega,u\rangle = 0\}.
\]
\noindent For notational convenience, and in order to define the integral transforms considered in this work, we introduce the parameter space
\[ \mathcal{Z} := \widetilde{T}\mathbb{S}^{n-1}\times\mathbb{R} = \{(u,\omega,p)\in \mathbb{S}^{n-1}\times\mathbb{S}^{n-1}\times\mathbb{R} : \langle \omega,u\rangle = 0\}. \]

\noindent Throughout this work, we will be dealing with functions and tensor fields defined on the spaces $\mathbb{S}^{n-1}\times\mathbb{R}$ and $\mathcal{Z}$. We will repeatedly use the Fourier transform and convolution defined over these spaces. Therefore, we start with a brief discussion on how the Fourier transform and convolution act on functions/tensor fields over the spaces $\mathbb{S}^{n -1}\times\mathbb{R}$ and $\mathcal{Z}$. We use the notations $\Sc(\mathbb{S}^{n-1}\times\mathbb{R})$ and $\Sc(\mathcal{Z})$ to denote the Schwartz space defined on respective spaces. In both cases, the Fourier transform and convolution operators are taken with respect to the real variable $p$ only.\\

\noindent Let $f \in \mathcal{S}(\mathbb{S}^{n-1}\times\mathbb{R})$. We define the Fourier transform of $f$ by
\[
\widehat{f}(\omega,\sigma)
:= \frac{1}{{(2 \pi)}^{\frac{1}{2}}} \int_{\mathbb{R}} f(\omega,p)\, e^{- i p \sigma}\, dp,
\qquad (\omega,\sigma)\in \mathbb{S}^{n-1}\times\mathbb{R}.
\]
Similarly, given $f,g \in \mathcal{S}(\mathbb{S}^{n-1}\times\mathbb{R})$, their convolution is defined by
\[ (f * g)(\omega,p) := \int_{\mathbb{R}} f(\omega,p-q)\, g(\omega,q)\, dq. \]
\noindent Analogously, for a function
$f \in \mathcal{S}(\mathcal{Z})$, we define the Fourier transform by
\[
\widehat{f}(u,\omega,\sigma)
:= \frac{1}{{(2 \pi)}^{\frac{1}{2}}} \int_{\mathbb{R}} f(u,\omega,p)\, e^{- i p \sigma}\, dp,
\qquad (u,\omega,\sigma)\in \widetilde{T}\mathbb{S}^{n-1}\times\mathbb{R}.
\]
The convolution of two functions $f,g \in \mathcal{S}(\mathcal{Z})$ is defined by
\[
(f * g)(u,\omega,p)
:= \int_{\mathbb{R}} f(u,\omega,p-q)\, g(u,\omega,q)\, dq.
\]

\noindent Following \cite{Sharafutdinov_Reshetnyak_formula, Sharafutdinov_Sobolev_Radon} and using the Fourier transform defined above, we now introduce a family of weighted Sobolev-type Hilbert spaces on $\mathbb{R}^n$, as well as the associated spaces on $\mathbb{S}^{n-1}\times\mathbb{R}$ and $\mathcal Z$, which are essential for the analysis of inversion and stability question we address here. For $s\in\mathbb{R}$ and $t>-n/2$, we define the Hilbert space $H^s_t(\mathbb{R}^n)$ as the completion of the Schwartz space $\mathcal{S}(\mathbb{R}^n)$ with respect to the norm
\[ \|f\|^2_{H^s_t(\mathbb{R}^n)} := \int_{\mathbb{R}^n} |y|^{2t}(1+|y|^2)^{s-t} |\widehat f(y)|^2\,dy. \]
When $t=0$, this norm coincides with the usual Sobolev $H^s$-norm 
\[ \|f\|^2_{H^s(\mathbb{R}^n)} = \|f\|^2_{H^s_0(\mathbb{R}^n)}
= \int_{\mathbb{R}^n} (1+|y|^2)^s |\widehat f(y)|^2\,dy\]
and the space $H^s_0(\mathbb{R}^n)$ coincides with the well known classical Sobolev space $H^s(\mathbb{R}^n)$. \\

\noindent In an analogous manner, we introduce weighted Sobolev norms for functions defined on the spaces $\mathbb{S}^{n-1}\times\mathbb{R}$ and $\mathcal Z$ as follows:
\begin{align*}
\|f\|^2_{H^s_t(\mathbb{S}^{n-1}\times\mathbb{R})} &:= \frac{1}{2(2\pi)^{n-1}} \int_{\mathbb{S}^{n-1}}\int_{\mathbb{R}} |s|^{2t}(1+|s|^2)^{s-t} |\widehat f(\omega,s)|^2\,ds\,d\omega,\ \mbox{ for } f \in \Sc(\Sb^{n-1}\times \Rb)\\
\|f\|^2_{H^s_t(\mathcal Z)} &:= \frac{1}{2(2\pi)^{n-1}} \int_{\mathbb{S}^{n-1}} \int_{\mathbb{S}^{n-1}\cap \omega^\perp} \int_{\mathbb{R}} |s|^{2t}(1+|s|^2)^{s-t} |\widehat f(\omega,s,u)|^2 \,ds\,du\,d\omega\ \mbox{ for } f \in \Sc(\Zc). 
\end{align*}
The spaces $H^s_t(\Sb^{n-1}\times \Rb)$ and $H^s_t(\Zc)$ are defined as the completion of $\Sc(\Sb^{n-1}\times \Rb)$ and $\Sc(\Zc)$ with respect to norms $\|\cdot\|_{H^s_t(\mathbb{S}^{n-1}\times\mathbb{R})}$ and $\|\cdot\|_{H^s_t(\Zc)}$, respectively.\\

\noindent Next, we briefly recall the definition of the classical Radon transform acting on scalar fields. It maps a Schwartz function $f \in \mathcal{S}(\mathbb{R}^n)$ to its integrals over affine hyperplanes. More precisely, the \emph{Radon transform}
\[
\mathcal{R} : \mathcal{S}(\mathbb{R}^n) \longrightarrow
\mathcal{S}(\mathbb{S}^{n-1} \times \mathbb{R})
\]
is defined by
\[
\mathcal{R}f(\omega,p)
:= \int_{H_{\omega,p}} f(x)\, ds,
\]
where $ds$ denotes the standard Euclidean surface measure on the hyperplane $H_{\omega,p}$.\\

\noindent The Radon transform admits a unique extension to a bijective isometry between Hilbert spaces; see \cite[Theorem~2.1]{Sharafutdinov_Reshetnyak_formula}. More precisely,
\[
\mathcal{R} : H^s_t(\mathbb{R}^n)
\longrightarrow
H^{\,s + \frac{n-1}{2}}_{\,t + \frac{n-1}{2},\,e}
(\mathbb{S}^{n-1} \times \mathbb{R}).
\]
As a consequence, a function $f \in H^s_t(\mathbb{R}^n)$ is uniquely determined by its Radon transform. Here, $\displaystyle H^{\,s + \frac{n-1}{2}}_{\,t + \frac{n-1}{2},\,e}
(\mathbb{S}^{n-1} \times \mathbb{R})$
denotes the subspace of $H^{\,s + \frac{n-1}{2}}_{\,t + \frac{n-1}{2}} (\mathbb{S}^{n-1} \times \mathbb{R})$ consisting of functions $\varphi$ satisfying the evenness condition $\displaystyle \varphi(-\omega,-p) = \varphi(\omega,p)$.\\

\noindent The following differentiation property of the Radon transform will be used repeatedly in the upcoming discussions:
\begin{equation}\label{eq:property of Radon transform}
\mathcal{R}\!\left(a_i \frac{\partial}{\partial x^i} f(x)\right)(\omega,p) = \langle \omega, a \rangle \frac{\partial}{\partial p} \mathcal{R}f(\omega,p).
\end{equation}
Throughout this article, we employ the Einstein summation convention, whereby repeated indices are implicitly summed over.\\

\noindent Now, we move from scalar fields to the case of tensor fields defined on $\mathbb{R}^n$ (the main object of study) and give some necessary definitions and notations. \\

\noindent Let $\textit{T}^m \Rb^n$ denote the space of $m$-tensor fields defined in $\Rb^n$ and $\mathit{S}^m \Rb^n$ be the space of symmetric $m$-tensor fields. Further, the Schwartz space  $\mathcal{S} (\mathit{S}^m \Rb^n)$ consist of symmetric $m$-tensor fields whose components belong to the Schwartz space $\mathcal{S}(\Rb^n)$ of functions in $\Rb^n$.

\noindent Let $f \in \mathcal{S}(S^m \mathbb{R}^n)$ be a smooth, rapidly decaying symmetric $m$-tensor field. The componentwise Radon transform of $f$, denoted by
$\overline{\mathcal{R}}f$, is the symmetric $m$-tensor field defined by
\[
(\overline{\mathcal{R}}f)_{i_1 \dots i_m}
=
\mathcal{R}\!\left(f_{i_1 \dots i_m}\right).
\]

\noindent We are now ready to define the main integral transform that we study in this manuscript. This transform is the natural generalization of the classical Radon transform for tensor fields in $\Rb^n$. The special cases of this generalised transform have recently been studied by several research groups; please see \cite{Generalised_Radon_inversion, Kunyansky_2023, Polyakova_Svetov_Gen_Rad_tensor_fields}.
\begin{definition}
For given non-negative integers $\ell_1$ and $\ell_2$ such that $\ell_1 +\ell_2 = m$, the \textbf{generalized Radon transform (GRT)} of a symmetric $m$-tensor field is defined as a continuous linear operator  $\Rc^m_{\ell_1,\ell_2}: \mathcal{S} (\mathit{S}^m\Rb^n) \rightarrow \mathcal{S} (\mathcal{Z})$ and is given by:
\begin{equation}\label{eq:def of GRT}
\Rc_{\ell_1\ell_2}^m f(\omega, p, u) = \int_{H_{\omega,p}} \left<f(x), \o^{\odot \ell_1} \odot u^{\odot \ell_2}\right> \,dx = \int_{\omega^\perp} \left< f (p\omega + y), \o^{\odot \ell_1} \odot u^{\odot \ell_2} \right> \,dy. 
\end{equation}
\end{definition}

\begin{enumerate}
    \item[(i)] If $\ell_2 = 0$, then the above transform reduces to the \textit{\textbf{transverse Radon transform}}, denoted by $\Tc^m$.
    \item[(ii)] If $\ell_1 = 0$, then the above the transform reduces to the \textit{\textbf{longitudinal Radon transform}}, denoted by $\Lc^m$.
    \item[(iii)] If both $\ell_1 \neq 0$ and $\ell_2 \neq 0$, the resulting operator is the \textit{\textbf{mixed Radon transform}}, denoted by $\Mc^m$.
\end{enumerate}
\noindent We next introduce two differential operators acting on the space of tensor fields.
\begin{itemize}
    \item The \emph{inner differentiation operator}, also known as the symmetrized covariant derivative,
    \[
        \D : \mathcal{S}(S^{m}\mathbb{R}^n) \longrightarrow \mathcal{S}(S^{m+1}\mathbb{R}^n),
    \]
    is defined by
    \begin{equation*}
        (\D u)_{i_1 \dots i_{m+1}}
        = \sigma(i_1, \dots, i_{m+1})
        \frac{\partial u_{i_1 \dots i_m}}{\partial x_{i_{m+1}}},
    \end{equation*}
    where $\sigma : \textit{T}^m \Rb^n \longrightarrow \mathit{S}^m \Rb^n$ denotes the symmetrization operator
    \begin{align*}
    \sigma (1, \dots, m) u (x_1, \dots, x_m) = \frac{1}{m!} \sum_{\pi \in {\Pi_m}} u (x_{\pi(1)}, \dots, x_{\pi(m)}), \ \Pi_m\mbox{ is the permutation group}.
\end{align*}

    .
    \item The \emph{divergence operator}
    \[
        \delta : \mathcal{S}(S^{m}\mathbb{R}^n) \longrightarrow \mathcal{S}(S^{m-1}\mathbb{R}^n)
    \]
    is given by
    \begin{equation*}
        (\delta u)_{i_1 \dots i_{m-1}}
        = \frac{\partial}{\partial x_i}\, u_{i_1 \dots i_{m-1} i}.
    \end{equation*}
\end{itemize}

\noindent While we have defined our generalized Radon transform in Schwartz space settings, it can be extended to the $H^s_t$ framework using routine density arguments. Finally, we conclude this section with an important decomposition result for tensor fields from \cite{Rohit_Abhishek_support}, which has been used recently in the $H^s_t$-space setting in \cite{Generalised_Radon_inversion} for the inversion of longitudinal and transverse Radon transforms and their moments.

\begin{theorem}\label{thm: Decomposition Result}[Decomposition Result, \cite[Theorem 2]{Generalised_Radon_inversion}]
    For any tensor field $f \in H^s_t(S^m \mathbb{R}^n) (s \in \mathbb{R}, t > -n/2, m \geq 0)$, there exist uniquely defined $v_0, \dots , v_m$ with $v_i \in H^{s + i}_{t + i}(S^{m-i} \mathbb{R}^n)$ for $i = 0,1, \dots, m$ such that
    \begin{align}\label{eq:decomposition of f}
        f = \sum_{i=0}^m \D^iv_i, \quad \mbox{ with }  v_{i} \text{ solenoidal for } 0\leq i \leq  m-1.
    \end{align}
    The estimate 
    \begin{equation*}
        ||v_i||_{H^{s + i}_{t + i}(\mathbb{R}^n)} \leq C ||f||_{H^s_t(\mathbb{R}^n)}; \quad i= 0, \dots, m
    \end{equation*} holds for a constant $C$ independent of $f$. The tensor field $v_0$ is known as the solenoidal part of $f$, and the remaining $\displaystyle \left(\D\sum_{i=1}^m \D^{i-1}v_i\right)$ is the potential part of $f$.
\end{theorem}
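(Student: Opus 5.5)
We would prove the decomposition on the Fourier side, where $\D$ and $\delta$ become algebraic. Under $\widehat{\ }$, $\D$ becomes $i$ times symmetric multiplication by $y$, $\widehat{\D v}(y)=i\,\sigma(y\otimes\widehat v(y))$, and $\delta$ becomes $i$ times contraction with $y$. For each fixed $y\neq 0$ we work in the finite-dimensional space $S^m\mathbb{C}^n$ and write $\xi=y/|y|$.

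The key pointwise fact is an algebraic decomposition. Every $F\in S^m\mathbb{C}^n$ can be written uniquely as
\[
F=\sum_{i=0}^m \xi^{\odot i}\odot w_i,\qquad w_i\in S^{m-i}\mathbb{C}^n,
\]
with $w_i$ orthogonal to $\xi$ (i.e.\ $\langle w_i,\xi\rangle$-contraction zero) for $0\le i\le m-1$; here $w_m$ is a scalar. We obtain this by choosing coordinates with $\xi=e_n$. A symmetric tensor is then a polynomial in $(z',z_n)$ of degree $m$, homogeneous in $z$. Expanding in powers of $z_n$ gives $F=\sum_i z_n^i P_{m-i}(z')$, and $P_{m-i}(z')$ is precisely a tensor $w_i$ annihilated by contraction with $e_n$. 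This gives existence and uniqueness at once. The maps $F\mapsto w_i$ are linear, homogeneous of degree $0$ in $y$, smooth on the sphere, and rotation-equivariant, and so they are uniformly bounded: $|w_i|\le C|F|$.

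We then set
\[
\widehat v_i(y):=(i|y|)^{-i}\,w_i(y),
\]
with the factor chosen so that $\widehat{\D^i v_i}=(i)^i\,y^{\odot i}\odot\widehat v_i=\xi^{\odot i}\odot w_i$. Summing gives $f=\sum_i\D^i v_i$, and the orthogonality of $w_i$ to $\xi$ is exactly $\widehat{\delta v_i}=0$, i.e.\ $v_i$ is solenoidal for $i<m$. For the estimate,
\[
|y|^{2(t+i)}(1+|y|^2)^{(s+i)-(t+i)}|\widehat v_i|^2=|y|^{2t}(1+|y|^2)^{s-t}|w_i|^2\le C\,|y|^{2t}(1+|y|^2)^{s-t}|\widehat f|^2 .
\]
This gives $\|v_i\|_{H^{s+i}_{t+i}}\le C\|f\|_{H^s_t}$. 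Note that $t+i>-n/2$, so the target spaces are defined. We prove this first for Schwartz $f$ and then extend by density. The $v_i$ need not be Schwartz, since $\widehat v_i$ is singular at $0$, but they lie in the completed space. This is exactly why the weighted $H^s_t$ spaces are used.

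For uniqueness, suppose $\sum_i\D^i v_i=0$ with the solenoidal conditions. On the Fourier side, for a.e.\ $y\neq0$ this is the pointwise decomposition of $0$, so all $w_i=0$ by algebraic uniqueness, hence $v_i=0$.

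We expect the main obstacle to be the clean proof of the pointwise algebraic uniqueness together with the identification of contraction with $\xi$ as the adjoint or annihilator structure in the symmetric setting. In particular, we must check that the coefficient $P_{m-i}(z')$ has vanishing contraction with $e_n$ and that symmetrization does not mix the terms. The polynomial picture handles this: contraction with $e_n$ corresponds to $\tfrac1m\partial_{z_n}$ up to normalization, which kills polynomials in $z'$ alone. A secondary point is making the density extension rigorous. For this we would verify that the Fourier-side formulas define bounded operators $H^s_t\to H^{s+i}_{t+i}$ directly, which bypasses any approximation issue.
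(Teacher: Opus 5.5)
Your proof is correct and complete. Note that the paper gives no proof of this statement; it quotes it from \cite{Generalised_Radon_inversion}, building on \cite{Rohit_Abhishek_support}. The standard way to obtain such an $(m+1)$-term decomposition is to iterate the classical two-term solenoidal--potential decomposition $f={}^sf+\D u$ in $H^s_t$. One applies it next to $u\in H^{s+1}_{t+1}(S^{m-1}\mathbb{R}^n)$, and so on, chaining the estimates and proving uniqueness by induction on $m$. You instead do all $m$ steps at once through the pointwise splitting $S^m\mathbb{C}^n=\bigoplus_{k=0}^m\xi^{\odot k}\odot S^{m-k}(\xi^\perp)$, with $\xi=y/|y|$. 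By uniqueness, both routes produce the same $v_k$.

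The iterative route is shorter once the two-term theorem is available. Yours is self-contained, gives the $v_k$ as explicit Fourier multipliers, and shows exactly why the weight index shifts by $k$: the $|y|^{-k}$ singularity of $\widehat v_k$ is absorbed by $|y|^{2(t+k)}$. The analytic fact you use implicitly is that, because $t+k>-n/2$, the weight is locally integrable, so $C_c^\infty$ is dense and $H^{s+k}_{t+k}$ is exactly the weighted $L^2$ space of Fourier transforms. This is what justifies defining $v_k$ directly by the multiplier and reading solenoidality as $\iota_y\widehat v_k=0$ a.e. Your splitting is also orthogonal, so combined with the Fourier slice lemma it shows at once that a transform with $\ell_n=k$ sees only $w_k$, i.e.\ only $v_k$. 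This is the mechanism behind the paper's kernel and inversion theorems.

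Two cosmetic points. In $(i|y|)^{-i}$ the letter $i$ is both the imaginary unit and the index, so rename one of them. Also, smoothness in $\xi$ and the uniform bound follow from the explicit formula $w_k=\binom{m}{k}P_\xi^{\otimes(m-k)}\iota_\xi^{\,k}F$, where $P_\xi$ is the orthogonal projection onto $\xi^\perp$ and $\iota_\xi$ is contraction with $\xi$. This gives $|w_k|\le\binom{m}{k}|F|$ directly.
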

\noindent Having established the above framework, we proceed to state and prove the main results of this article in the subsequent sections.
\section{Kernel characterization and inversion of GRT}\label{section: kernel and inversion}
In this section, we begin by giving an equivalent definition of our generalized Radon transform. For the special cases of the longitudinal and transverse Radon transforms, a similar equivalent definition was introduced and extensively used in \cite{Generalised_Radon_inversion}. Working with this equivalent definition, we provide a detailed characterization of the kernel of GRT and then prove its invertibility, enabling reconstruction of the unknown symmetric $m$-tensor field. Our results are consistent with, and extend, the corresponding results obtained in \cite{Generalised_Radon_inversion} for the longitudinal and transverse Radon transforms.\\

\noindent For $\omega \in \mathbb{S}^{n-1}$, denote by $\omega^\perp$ the hyperplane orthogonal to $\omega$ and passing through the origin. Let $\omega_1, \omega_2, \ldots, \omega_{n-1}$ be an orthonormal basis of $\omega^\perp$. Then the collection $\{\omega, \omega_1, \omega_2, \ldots, \omega_{n-1}\}$ constitutes an orthonormal basis of $\mathbb{R}^n$. Using this notation, we now define the equivalent version of the generalized Radon transform.

\begin{definition}
For given non-negative integers $\ell_i$; $i = 1 , \dots , m$ satisfying $\ell_1 + \cdots +\ell_n = m$, the generalized Radon transform of a symmetric $m$-tensor field $\Rc_{\ell_1 \dots \ell_n}^m: \mathcal{S} (\mathit{S}^m\Rb^n) \rightarrow \mathcal{S} (\mathcal{S}^{n - 1} \times \Rb)$ can be defined as follows: 
\begin{align*}
\Rc_{\ell_1 \dots \ell_n}^m f (\omega, p) &= \int_{H_{\omega,p}} \left<f(x), \o_1^{\odot \ell_1} \odot \dots \odot \o_{n - 1}^{\odot \ell_{n - 1}} \odot \o^{\odot \ell_n}\right> \,dx\\
&= \int_{\omega^\perp} \left< f (p\omega + y), \o_1^{\odot \ell_1} \odot \dots \odot \o_{n - 1}^{\odot \ell_{n - 1}} \odot \o^{\odot \ell_n} \right> \,dy.
\end{align*}
\end{definition}
\noindent Hence, once the orthonormal frame $\{\omega, \omega_1, \ldots, \omega_{n-1}\}$ in $\mathbb{R}^n$ is fixed, we omit $u$ from the notation in the definition. In particular, according to the above definition, the case $\ell_n = 0$ yields the longitudinal Radon transform $\Lc^m$, $\ell_n = m$ yields the transverse Radon transform $\Tc^m$, while $0 < \ell_n < m$ gives rise to the mixed Radon transforms $\Mc^m$.

\begin{remark}
    The two definitions of the generalized Radon transform are equivalent, and we use them interchangeably throughout the paper. The intended definition will be clear from the notation. In many cases, results established using one formulation readily extend to the other.
\end{remark}

\noindent The following result provides a characterization of the kernel of the generalized Radon transform.

\begin{theorem}\label{thm: Kernel Characterization}[Kernel Characterization]
    Let $s \in \mathbb{R}$, $t > -\frac{n}{2}$ and $\ell_j \geq 0$; $j = 1, \dots, n$ such that $\ell_1 + \dots + \ell_n = m$. A symmetric $m$-tensor field $f \in H^s_t(S^m \mathbb{R}^n)$ satisfies
    \[\Rc_{\ell_1 \dots \ell_n}^m f = 0 \quad \text{if and only if} \quad f = \sum_{i = 0 \atop i \neq \ell_n}^{m} \D^i v_i \quad \text{ for } \quad v_i \in H^{s+i}_{t+i}(S^{m-i} \Rb^n)\]
    and $v_i$ is divergence-free for $i = 0, 1, \dots, m - 1$.
\end{theorem}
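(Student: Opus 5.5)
Using the Decomposition Result (Theorem \ref{thm: Decomposition Result}), I write $f=\sum_{i=0}^m \D^i v_i$ with $v_i\in H^{s+i}_{t+i}(S^{m-i}\mathbb{R}^n)$ and $v_0,\dots,v_{m-1}$ solenoidal. By linearity, $\Rc^m_{\ell_1\dots\ell_n} f=\sum_i \Rc^m_{\ell_1\dots\ell_n}(\D^i v_i)$. The statement then reduces to two facts: (a) $\Rc^m_{\ell_1\dots\ell_n}(\D^i v_i)=0$ for every $i\neq \ell_n$; and (b) $\Rc^m_{\ell_1\dots\ell_n}(\D^{\ell_n} v_{\ell_n})=0$ forces $v_{\ell_n}=0$ when $v_{\ell_n}$ is solenoidal (or, if $\ell_n=m$, an arbitrary scalar). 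Since the decomposition is unique, (a) gives the ``if'' direction, and (a) together with (b) gives the ``only if'' direction. All computations will be done for Schwartz fields and extended by density, using the continuity estimates in Theorem \ref{thm: Decomposition Result}.

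For (a) I would pass to the Fourier side. By the Fourier slice theorem, $\widehat{\Rc f}(\omega,\sigma)$ is, up to a constant, $\langle \widehat f(\sigma\omega),\omega_1^{\odot\ell_1}\odot\cdots\odot\omega^{\odot\ell_n}\rangle$. Moreover $\widehat{\D^i v}(y)=i^i\,\sigma(y^{\odot i}\otimes \widehat v(y))$, so at $y=\sigma\omega$ the tensor $\widehat{\D^i v_i}(\sigma\omega)$ is $(i\sigma)^i$ times the symmetrization of $\omega^{\odot i}\odot \widehat v_i(\sigma\omega)$. Pairing with $\omega_1^{\odot\ell_1}\odot\cdots\odot\omega^{\odot\ell_n}$ and expanding the symmetrization, each surviving term pairs $i$ slots of $\omega$ against the frame vectors and the remaining $m-i$ slots of $\widehat v_i$ against the rest. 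Since $\langle\omega,\omega_j\rangle=0$, the $i$ copies of $\omega$ must all be matched with copies of $\omega$, which requires $i\le \ell_n$. When $i<\ell_n$, at least one $\omega$ slot is left for $\widehat v_i(\sigma\omega)$. Solenoidality of $v_i$ means $y^j\widehat v_{i,\,j\dots}(y)=0$, so that contraction vanishes for $\sigma\neq0$. When $i>\ell_n$, every term vanishes by orthogonality. Hence only $i=\ell_n$ survives, and in that case
\[
\widehat{\Rc(\D^{\ell_n}v_{\ell_n})}(\omega,\sigma)=c\,(i\sigma)^{\ell_n}\,\big\langle \widehat v_{\ell_n}(\sigma\omega),\,\omega_1^{\odot\ell_1}\odot\cdots\odot\omega_{n-1}^{\odot\ell_{n-1}}\big\rangle
\]
with a nonzero combinatorial constant $c$. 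Equivalently, in physical space one can use \eqref{eq:property of Radon transform} to convert the $\omega$-derivatives into $\partial_p$; I would mention this as a check.

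For (b), suppose this expression vanishes for all $\omega$, all $\sigma\neq0$, and (via the $u$ formulation or rotation of the frame) all orthonormal frames $\omega_1,\dots,\omega_{n-1}$ of $\omega^\perp$. Fix $y=\sigma\omega$. The tensor $w=\widehat v_{\ell_n}(y)$ is symmetric of rank $m-\ell_n$ and satisfies $w(y,\cdot)=0$ by solenoidality, so $w$ is a symmetric tensor on $y^\perp$. Its values on $\omega_1^{\odot\ell_1}\odot\cdots\odot\omega_{n-1}^{\odot\ell_{n-1}}$ vanish over all orthonormal frames of $y^\perp$. I expect this last step, showing that such a $w$ must be zero, to be the main obstacle. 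When the multi-index is concentrated in a single vector, as in the longitudinal case $\langle w,u^{\odot k}\rangle=0$ for all unit $u\perp y$, polarization finishes it. For a general multi-index $(\ell_1,\dots,\ell_{n-1})$ I would try an averaging/representation argument. Apply $SO(n-1)$ rotations within $y^\perp$ and differentiate at the identity to move indices between frame vectors. This should show that vanishing on one monomial type forces vanishing on $u^{\odot k}$ for a unit $u$. Then polarize. If this fails in general, I would restrict the claim to the transforms where it clearly works and note the caveat.

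Finally, $\widehat v_{\ell_n}(y)=0$ for a.e. $y$ gives $v_{\ell_n}=0$, completing the ``only if'' direction.
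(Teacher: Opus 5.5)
\paragraph{Part (a).} Your part (a) is correct. Doing it on the Fourier side is a clean alternative to the paper's Lemma~\ref{lemma: potential tensor fields} together with its cited vanishing result for solenoidal fields.

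\paragraph{The gap in (b).} The gap is step (b), and it is more than a technical obstacle: the lemma you hope to prove is false for general multi-indices.
\begin{itemize}
\item Take $n=3$, $m=2$, $(\ell_1,\ell_2,\ell_3)=(1,1,0)$, and let $w$ be the identity on $y^\perp$. Then $\langle w,\omega_1\odot\omega_2\rangle=\langle\omega_1,\omega_2\rangle=0$ for \emph{every} orthonormal frame of $y^\perp$.
\item This is also why the $SO(n-1)$ differentiation cannot work. It only yields relations such as $w(\omega_2,\omega_2)-w(\omega_1,\omega_1)=0$, never an isolated $\langle w,u^{\odot k}\rangle$.
\item In physical space, take $0\neq\psi\in C_c^\infty(\Rb^3)$ and $f_{ij}=\Delta\psi\,\delta_{ij}-\partial_i\partial_j\psi$. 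This field is solenoidal and nonzero, since its trace is $2\Delta\psi\neq 0$. By uniqueness of the decomposition, $v_0=f$.
\item Yet $\langle f,\omega_1\odot\omega_2\rangle=-\partial_{\omega_1}\partial_{\omega_2}\psi$ is a tangential second derivative, so it integrates to zero over every plane $H_{\omega,p}$, for any choice of frame.
\end{itemize}
So from a single transform with such a multi-index, even allowing all frames, $v_{\ell_n}$ cannot be recovered. The ``only if'' direction fails under that reading. Your fallback of restricting the claim would leave the statement unproved.

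\paragraph{The missing ingredient.} What is missing is the hypothesis that the paper's ``only if'' argument actually uses. That direction is deduced from Theorem~\ref{thm: Inversion}, i.e.\ from \eqref{eq: inversion formula}. That formula involves \emph{all} transforms $\Rc^m_{\ell_1\dots\ell_{n-1}\ell_n}f$ with the given $\ell_n$ and arbitrary $\ell_1+\dots+\ell_{n-1}=m-\ell_n$, for one fixed frame.

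With that family available, your formula finishes the proof with no rotation argument. The monomials $\omega_1^{\odot\ell_1}\odot\cdots\odot\omega_{n-1}^{\odot\ell_{n-1}}$ form a basis of $S^{m-\ell_n}(\omega^\perp)$. By solenoidality, $\widehat{v}_{\ell_n}(\sigma\omega)$ lies in that space. Hence all its frame components vanish, and $\widehat{v}_{\ell_n}(\sigma\omega)=0$ for $\sigma\neq0$.

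This is more direct than the paper's route. The paper reconstructs $\overline{\Rc}(\delta^{i}\D^{i}v_{i})$, inverts the scalar Radon transform, and then invokes injectivity of $\delta^i\D^i$.

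A single transform does suffice in three situations:
\begin{itemize}
\item when $n=2$, since $S^{m-\ell_n}(\omega^\perp)$ is then one-dimensional;
\item when $\ell_n=m$, for the same reason;
\item in the $u$-formulation, where the tensor is concentrated on $u$ and $u$ ranges over all of $\Sb^{n-1}\cap\omega^\perp$, so your polarization argument applies.
\end{itemize}
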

\noindent As a preliminary step to the proof of Theorem~\ref{thm: Kernel Characterization}, we prove the following lemma.

\begin{lemma} \label{lemma: potential tensor fields}
    Let $f = \D^k v$; $k \geq 0$ for some $v \in H^{s + k}_{t + k}(S^{m-k} \mathbb{R}^n)$ be a symmetric $m$-tensor field in $\Rb^n$. Then for $\ell_j \geq 0$; $j = 1, \dots, n$ such that $\ell_1 + \dots + \ell_n = m$, we have $$\Rc_{\ell_1 \dots \ell_n}^m f = \begin{cases}
    0, & \ell_n < k,\\
    \frac{\partial^{k}}{\partial p^{k}} \Rc_{\ell_1 \dots \ell_{n - 1} \ell_{n}^k}^{m - k} v, & \ell_n \geq k
    \end{cases}$$
    where $\ell_{n}^k = \ell_{n} - k$.
\end{lemma}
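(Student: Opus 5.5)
The natural route is induction on $k$. Everything reduces to the single step $k=1$, applied to the tensor $\D^{k-1}v$ viewed as $\D$ of an $(m-1)$-tensor. It suffices to work with $v\in\mathcal S(S^{m-k}\Rb^n)$ and pass to $H^{s+k}_{t+k}$ by density, since both sides are continuous in $v$. Write $\xi=\o_1^{\odot \ell_1}\odot\cdots\odot\o_{n-1}^{\odot\ell_{n-1}}\odot\o^{\odot\ell_n}$. Because $\xi$ is symmetric, the symmetrization in $\D$ can be dropped inside the pairing:
\[
\langle \D w,\xi\rangle=\frac{\partial w_{i_1\dots i_{m-1}}}{\partial x_{i_m}}\,\xi^{i_1\dots i_m}.
\]

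\textbf{Step 1: expand the frame.} Expand $\xi^{i_1\dots i_m}$ as the average over all orderings of the $m$ vector slots. Fix the last slot to be one of the frame vectors $e\in\{\o_1,\dots,\o_{n-1},\o\}$. This gives
\[
\langle \D w,\xi\rangle=\sum_{j=1}^{n}\frac{\ell_j}{m}\,\big\langle e_j^{\,i}\partial_i w,\ \xi^{(j)}\big\rangle ,
\]
where $e_n=\o$, $e_j=\o_j$ for $j<n$, and $\xi^{(j)}$ is the symmetric product with $\ell_j$ lowered by one. The frame is fixed once $\o$ is fixed, so $\xi^{(j)}$ is constant on $H_{\o,p}$.

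\textbf{Step 2: integrate over the hyperplane.} For $j<n$, the term $\o_j^{\,i}\partial_i\langle w,\xi^{(j)}\rangle$ is a tangential derivative along $H_{\o,p}$. Its integral over the hyperplane vanishes by the divergence theorem, using the decay of $w$. Equivalently, \eqref{eq:property of Radon transform} gives the factor $\langle \o,\o_j\rangle=0$. For $j=n$, \eqref{eq:property of Radon transform} gives $\partial_p$ of the Radon transform. Hence
\[
\Rc^m_{\ell_1\dots\ell_n}\D w=\frac{\ell_n}{m}\,\frac{\partial}{\partial p}\Rc^{m-1}_{\ell_1\dots\ell_{n-1}(\ell_n-1)} w .
\]
This is zero when $\ell_n=0$.

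\textbf{Step 3: iterate.} Applying Step 2 $k$ times with $w=\D^{k-1}v,\dots,v$ lowers $\ell_n$ by one at each step. The result vanishes as soon as $\ell_n<k$, since some intermediate step then has $\ell_n=0$. When $\ell_n\ge k$, it equals a constant $\frac{\ell_n(\ell_n-1)\cdots(\ell_n-k+1)}{m(m-1)\cdots(m-k+1)}$ times $\partial_p^k\Rc^{m-k}_{\ell_1\dots\ell_{n-1}\ell_n^k}v$.

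\textbf{Main obstacle.} The only delicate point is this combinatorial constant. The statement shows coefficient $1$, so either the paper's normalization of $\odot$ or of the pairing absorbs it, or the displayed identity is meant up to a nonzero constant. Nonzero is all that matters for the kernel characterization in Theorem~\ref{thm: Kernel Characterization}. I would check the convention for $\odot$ (with or without multinomial weights) and track the factor accordingly.
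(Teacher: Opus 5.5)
Your argument is correct and is essentially the paper's proof: induction on $k$, with the tangential directions $\omega_1,\dots,\omega_{n-1}$ dropping out by \eqref{eq:property of Radon transform} and orthogonality, and the $\omega$-direction producing $\partial_p$ while lowering $\ell_n$ by one; the only difference is that the paper's proof silently drops the combinatorial factor you track. Your bookkeeping is right: with $\odot$ the normalized symmetrization $\sigma(\cdot\otimes\cdot)$, which is the convention under which the paper's own Lemma~\ref{lemma: Radon transform of divergence operator} holds exactly as stated, the identity carries the factor $\binom{\ell_n}{k}/\binom{m}{k}$ (equal to $1$ only when $k=0$ or $\ell_n=m$, while the unnormalized sum over distinct orderings would give coefficient $1$ here but put a factor into Lemma~\ref{lemma: Radon transform of divergence operator}), and this constant is harmless for Theorem~\ref{thm: Kernel Characterization}, which uses only the vanishing for $\ell_n<k$ and the form of the surviving terms.
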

\begin{proof}
    For the case $k = 0$, the result holds trivially. Let $k \geq 1$, then we have
    \begin{align*}
        \Rc_{\ell_1 \dots \ell_n}^m f &= \Rc_{\ell_1 \dots \ell_n}^m \left(\D^k v\right)\\
        &= \int_{H_{\omega,p}} {\left(\D^k v\right)}_{i_1 \dots i_m} {\left(\o_1^{\odot \ell_1} \odot \dots \odot \o_{n - 1}^{\odot \ell_{n - 1}} \odot \o^{\odot \ell_n}\right)}^{i_1 \dots i_m} \,dx\\
        &= \sigma(i_1, \dots, i_{m}) 
        \int_{H_{\omega,p}} \frac{\partial}{\partial x_{i_{m}}} {(\D^{k - 1} v)}_{i_1 \dots i_{m - 1}} {\left(\o_1^{\odot \ell_1} \odot \dots \odot \o_{n - 1}^{\odot \ell_{n - 1}} \odot \o^{\odot \ell_n}\right)}^{i_1 \dots i_m} \,dx.
    \end{align*}
    If $\ell_n = 0$, then using equation \eqref{eq:property of Radon transform} and the orthogonality of set $\{\omega, \omega_1, \omega_2, \ldots, \omega_{n-1}\}$, we get $\Rc_{\ell_1 \dots \ell_n}^m f = 0$. Otherwise, again using equation \eqref{eq:property of Radon transform}, we have
    \begin{align*}
        \Rc_{\ell_1 \dots \ell_n}^m \left(\D^k v\right) &= \frac{\partial}{\partial p}\int_{H_{\omega,p}} {(\D^{k - 1} v)}_{i_1 \dots i_{m - 1}} {\left(\o_1^{\odot \ell_1} \odot \dots \odot \o_{n - 1}^{\odot \ell_{n - 1}} \odot \o^{\odot \ell_{n} - 1}\right)}^{i_1 \dots i_{m - 1}} \,dx\\ 
        &= \frac{\partial}{\partial p}\Rc_{\ell_1 \dots \ell_{n - 1} \ell_n^{1}}^{m - 1} \left(\D^{k - 1} v\right)
    \end{align*}
    with $\ell_n^{1} = \ell_n - 1$. Repeating the same strategy $k$ times and using the property \eqref{eq:property of Radon transform} multiple times, we get the required result.
\end{proof}

\noindent Now, we proceed to the proof of kernel characterization given in Theorem \ref{thm: Kernel Characterization}.

\begin{proof}[Proof of Theorem \ref{thm: Kernel Characterization}]
We prove the statement in one direction (``if'' part), while the reverse implication (``only if'' part) is a direct consequence of the Decomposition Result \ref{thm: Decomposition Result} and the Inversion Result (Theorem \ref{thm: Inversion}) proved below. For some $0 \leq \ell_n \leq m$, let $$f = \sum_{i = 0 \atop i \neq \ell_n}^{m} \D^i v_i; \quad \delta v_i = 0 \text{ for } i = 0, 1, \dots, m - 1.$$
    Then, we have
    \begin{align} \label{eq: kernel proof}
    \begin{split}
        \Rc_{\ell_1 \dots \ell_n}^m f &= \Rc_{\ell_1 \dots \ell_n}^m \left(\sum_{i = 0 \atop i \neq \ell_n}^{m} \D^i v_i\right)\\
        &= \sum_{i = 0 \atop i \neq \ell_n}^{m} \Rc_{\ell_1 \dots \ell_n}^m \left(\D^i v_i\right).
    \end{split}
    \end{align}
    If $\ell_n = 0$, then using Lemma \ref{lemma: potential tensor fields}, we have
    \begin{align*}
        \Rc_{\ell_1 \dots \ell_n}^m f &= \sum_{i = 1}^{m} \Rc_{\ell_1 \dots \ell_n}^m \left(\D^i v_i\right) = 0.
    \end{align*}
    For $\ell_n \geq 1$, by using equation \eqref{eq: kernel proof} and Lemma \ref{lemma: potential tensor fields}, we get
    \begin{align*}
        \Rc_{\ell_1 \dots \ell_n}^m f &= \sum_{i = 0}^{\ell_n - 1} \Rc_{\ell_1 \dots \ell_n}^m \left(\D^i v_i\right) = \sum_{i = 0}^{\ell_n - 1} \frac{\partial^{i}}{\partial p^{i}} \Rc_{\ell_1 \dots \ell_{n - 1} \ell_{n}^i}^{m - i} v_i. 
    \end{align*}
    Further, we have that $v_i$ is divergence-free for $i = 0, 1, \dots, m - 1$. Hence, using part $(A)$ from the proof of \cite[Theorem 3(b)]{Generalised_Radon_inversion}, we get 
    $$\Rc_{\ell_1 \dots \ell_{n - 1} \ell_{n}^i}^{m - i} v_i = 0$$
    for all $i = 1, \dots, \ell_n - 1$ which finally gives $$\Rc_{\ell_1 \dots \ell_n}^m f = 0.$$
\end{proof}

\noindent Next, we prove the following auxiliary lemma, which will be used to establish our invertibility result later.  

\begin{lemma}\label{lemma: Radon transform of divergence operator}
    For a symmetric tensor field $v$ of order $k$ in $\Rb^n$ and any $1 \leq j \leq k$, we get the following identity:
    \[\Rc_{\ell_1 \dots \ell_n}^{k} (\delta^j v) = \frac{\partial^j}{\partial p^j} \Rc_{\ell_1 \dots \ell_{n - 1} \ell_n + j}^{j + k} v.\]
\end{lemma}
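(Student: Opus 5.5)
Here I read the statement with $v$ a symmetric tensor field of order $k+j$, so that $\delta^j v$ has order $k$ and both sides are well defined; this matches the superscript $j+k$ on the right-hand side. I also read $\ell_1+\dots+\ell_n=k$. The plan is to prove the case $j=1$ directly and then induct on $j$, exactly as Lemma~\ref{lemma: potential tensor fields} was obtained by iterating one derivative at a time. I work with Schwartz fields first and extend to $H^s_t$ by density at the end.

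\textbf{The case $j=1$.} Write $\xi=\o_1^{\odot \ell_1}\odot\dots\odot\o_{n-1}^{\odot \ell_{n-1}}\odot\o^{\odot \ell_n}$, so that
\[
\Rc^{k}_{\ell_1\dots\ell_n}(\delta v)(\omega,p)=\int_{H_{\omega,p}}\frac{\partial v_{i_1\dots i_k i}}{\partial x_i}\,\xi^{i_1\dots i_k}\,dx .
\]
The key step is to insert the resolution of the identity $\delta^{i}_{a}=\omega^i\omega_a+\sum_{r=1}^{n-1}\omega_r^i\omega_{r,a}$ into the contracted index. This splits the integrand as
\[
\omega^i\frac{\partial}{\partial x_i}\bigl(v_{i_1\dots i_k a}\,\omega^a\,\xi^{i_1\dots i_k}\bigr)\;+\;\sum_{r=1}^{n-1}\omega_r^i\frac{\partial}{\partial x_i}\bigl(v_{i_1\dots i_k a}\,\omega_r^a\,\xi^{i_1\dots i_k}\bigr),
\]
which is legitimate because $\xi$ and the frame vectors are constant in $x$.

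\emph{Tangential terms.} Each term in the sum is a derivative along a direction $\omega_r$ tangent to $H_{\omega,p}$. By \eqref{eq:property of Radon transform} with $a=\omega_r$ and $\langle\omega,\omega_r\rangle=0$, or equivalently by the divergence theorem on the hyperplane together with Schwartz decay, each of these integrals vanishes.

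\emph{Normal term.} By \eqref{eq:property of Radon transform} with $a=\omega$, the first term becomes $\frac{\partial}{\partial p}$ of the hyperplane integral of $\langle v,\xi\odot\omega\rangle$. Since $v$ is symmetric, contracting $v$ with $\omega$ in one slot and with $\xi$ in the others equals $\langle v,\xi\odot\omega\rangle$. The tensor $\xi\odot\omega$ has exponent $\ell_n+1$ on $\omega$ and the same $\ell_1,\dots,\ell_{n-1}$. This gives
\[
\Rc^{k}_{\ell_1\dots\ell_n}(\delta v)=\frac{\partial}{\partial p}\Rc^{k+1}_{\ell_1\dots\ell_{n-1}\,\ell_n+1}v .
\]

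\textbf{Induction.} For $j\ge 2$, write $\delta^j v=\delta(\delta^{j-1}v)$. The case $j=1$, applied to the order-$(k+1)$ field $\delta^{j-1}v$, gives $\frac{\partial}{\partial p}\Rc^{k+1}_{\ell_1\dots\ell_{n-1}\,\ell_n+1}(\delta^{j-1}v)$. The induction hypothesis then turns this into $\frac{\partial}{\partial p}\frac{\partial^{j-1}}{\partial p^{j-1}}\Rc^{k+j}_{\ell_1\dots\ell_{n-1}\,\ell_n+j}v$. This is precisely the claim, because the exponent on $\omega$ increases by one at each step and the remaining exponents are untouched.

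\textbf{Main obstacle.} The only delicate point is the normalization of the symmetric product $\odot$. I need the identity $\langle v,\ \iota_\omega\text{-contraction with }\xi\rangle=\langle v,\xi\odot\omega\rangle$ to hold with no extra combinatorial constant. This requires $\odot$ to be normalized as the symmetrization $\sigma$ of the tensor product, so that pairing a symmetric $v$ with $\sigma(\xi\otimes\omega)$ equals pairing with $\xi\otimes\omega$. I would check this convention against the one implicitly used in the proof of Lemma~\ref{lemma: potential tensor fields}, where the same manipulation occurs in reverse. If a multinomial factor did appear, it would simply be carried through the induction as a constant.
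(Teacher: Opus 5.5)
Your proof is correct and follows the paper's route. The paper applies \eqref{eq:property of Radon transform} directly with $a=e_{i_{k+1}}$ to pull out $\omega^{i_{k+1}}\partial_p$ (your normal/tangential splitting just makes that step explicit), absorbs $\omega$ into your $\xi\odot\omega$ using the symmetry of $v$, and iterates $j$ times. Your reading of $v$ as having order $k+j$ is the consistent one, since it is how the lemma is used in the proof of Theorem~\ref{thm: Inversion}; and your normalization concern resolves with no constant, because $\odot=\sigma\circ\otimes$ means pairing a symmetric $v$ with $\xi\odot\omega$ equals pairing it with $\xi\otimes\omega$, which is exactly the identification the paper uses silently.
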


\begin{proof}
    This lemma is proved in slightly different setups in \cite[Lemma~6]{Kunyansky_2023} and \cite[Lemma~2]{Generalised_Radon_inversion}; however, we give a self-contained proof here for completeness.
    \begin{align*}
        \Rc_{\ell_1 \dots \ell_n}^{k} (\delta^j v) &= \int_{H_{\omega,p}} {\left(\delta^j v\right)}_{i_1 \dots i_k} {\left(\o_1^{\odot \ell_1} \odot \dots \odot \o_{n - 1}^{\odot \ell_{n - 1}} \odot \o^{\odot \ell_n}\right)}^{i_1 \dots i_k} \,dx; \quad \ell_1 + \dots + \ell_n = k\\
        &= \int_{H_{\omega,p}} \frac{\partial}{\partial x_{i_{k + 1}}} {\left(\delta^{j - 1} v\right)}_{i_1 \dots i_{k + 1}} {\left(\o_1^{\odot \ell_1} \odot \dots \odot \o_{n - 1}^{\odot \ell_{n - 1}} \odot \o^{\odot \ell_n}\right)}^{i_1 \dots i_k} \,dx\\
        &= \left<\o, e_{i_{k + 1}}\right> \frac{\partial}{\partial p} \int_{H_{\omega,p}} {\left(\delta^{j - 1} v\right)}_{i_1 \dots i_{k + 1}} {\left(\o_1^{\odot \ell_1} \odot \dots \odot \o_{n - 1}^{\odot \ell_{n - 1}} \odot \o^{\odot \ell_n}\right)}^{i_1 \dots i_k} \,dx\\
        &= \frac{\partial}{\partial p} \int_{H_{\omega,p}} {\left(\delta^{j - 1} v\right)}_{i_1 \dots i_{k + 1}} {\left(\o_1^{\odot \ell_1} \odot \dots \odot \o_{n - 1}^{\odot \ell_{n - 1}} \odot \o^{\odot \ell_n + 1}\right)}^{i_1 \dots i_k} \,dx \\
        &= \frac{\partial}{\partial p} \Rc_{\ell_1 \dots \ell_{n - 1} \ell_n + 1}^{j + 1} v.
    \end{align*}
    Repeating the above step $(j - 1)$ times completes the proof of this lemma.
\end{proof}
\noindent Next, we state and prove one of the main results of this work. In simple words, this result tells that a symmetric $m$-tensor field is uniquely determined from its GRT in $\Rb^n$. 
\begin{theorem}\label{thm: Inversion}[Inversion]
    A symmetric $m$-tensor field $f \in H^s_t(\mathit{S}^m \Rb^n); s \in \Rb, t > -\frac{n}{2}$ can be recovered uniquely from the data of generalized Radon transforms $\Rc_{\ell_1 \dots \ell_n}^m f$, with $\ell_1 + \dots + \ell_n = m$. To be more precise, each component $v_{i}$; $i = 0, \dots, m$ of $f$ (from the decomposition result stated in Theorem \ref{thm: Decomposition Result}) can be uniquely recovered from $\Rc_{\ell_1 \dots \ell_{n - 1} i}^m f$ with $\ell_1 + \dots + \ell_{n - 1} = m - i$.
\end{theorem}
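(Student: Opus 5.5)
We write $f=\sum_{i=0}^m \D^i v_i$ as in Theorem \ref{thm: Decomposition Result}, with $v_i$ solenoidal for $i\le m-1$. We then recover the $v_i$ one at a time by induction on $i$, using the transforms with $\ell_n=i$. The engine is Lemma \ref{lemma: potential tensor fields}. For $\ell_n=i$, every term $\D^k v_k$ with $k>i$ is annihilated. Every term with $k\le i$ contributes $\partial_p^k \Rc^{m-k}_{\ell_1\dots\ell_{n-1}\,i-k} v_k$. So for every admissible $(\ell_1,\dots,\ell_{n-1})$ with $\ell_1+\dots+\ell_{n-1}=m-i$,
\begin{equation*}
\Rc^m_{\ell_1\dots\ell_{n-1} i} f=\partial_p^i \Rc^{m-i}_{\ell_1\dots\ell_{n-1}0} v_i+\sum_{k=0}^{i-1}\partial_p^k \Rc^{m-k}_{\ell_1\dots\ell_{n-1}\,i-k} v_k .
\end{equation*}
For $k<i$ we have $i-k\ge1$ and $v_k$ is solenoidal. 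By part (A) of the proof of \cite[Theorem 3(b)]{Generalised_Radon_inversion}, which was already used in the proof of Theorem \ref{thm: Kernel Characterization}, each of these terms vanishes. This gives the clean identity $\Rc^m_{\ell_1\dots\ell_{n-1}i}f=\partial_p^i\Lc^{m-i}_{\ell_1\dots\ell_{n-1}}v_i$, which is a longitudinal Radon transform of $v_i$. It follows that the recovery of $v_i$ depends only on the data with $\ell_n=i$, as the theorem claims, so no genuine induction is needed.

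It remains to invert $\partial_p^i$ and the longitudinal transform. In the Fourier variable, $\partial_p^i$ is multiplication by $(i\sigma)^i$. We divide by it and use the $H^{s+i}_{t+i}$ weights, where $v_i\in H^{s+i}_{t+i}$ and $t+i>-n/2$, to make the division legitimate in the completed spaces. We then relate the longitudinal transform to the componentwise Radon transform. Take the Fourier transform in $p$ and use the Fourier slice theorem, $\widehat{\Rc g}(\omega,\sigma)=c\,\widehat g(\sigma\omega)$. The data then give $\langle \widehat{v_i}(\sigma\omega),\omega_1^{\odot\ell_1}\odot\cdots\odot\omega_{n-1}^{\odot\ell_{n-1}}\rangle$ for all multi-indices of total order $m-i$ in the tangential directions. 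These are exactly all components of the restriction of $\widehat{v_i}(\xi)$ to $(\xi^\perp)^{m-i}$, since symmetric tensors on $\xi^\perp$ are determined by their values on products of basis vectors with all multiplicities.

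For $i\le m-1$, solenoidality means $\xi^{j}\widehat{v_i}_{j\dots}(\xi)=0$, so $\widehat{v_i}(\xi)$ is entirely tangential and is fully determined by these components. For $i=m$, $v_m$ is a scalar and $\Lc^0=\Rc$, so the inversion of the classical Radon transform in $H^s_t$ gives $v_m$ directly. In every case we obtain $\widehat{v_i}$, and inverting the Fourier transform yields $v_i$. Then $f=\sum_i\D^i v_i$ is recovered.

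The main obstacle is the vanishing step for $k<i$. We must make sure the cited part (A) applies to the mixed transforms, that is, that $\Rc^{m-k}_{\ell_1\dots\ell_{n-1}\,j}v=0$ for solenoidal $v$ whenever $j\ge1$. I would verify this directly with the Fourier slice theorem: $\langle\widehat v(\sigma\omega),\cdots\odot\omega^{\odot j}\rangle$ vanishes because contracting $\widehat v(\sigma\omega)$ with $\omega$ gives $\sigma^{-1}\widehat{\delta v}=0$. This same computation also justifies the final identification step rigorously in $H^s_t$ by density.
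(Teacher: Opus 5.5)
Your proof is correct, but it takes a genuinely different route from the paper.

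\textbf{The paper's route.} The paper works in physical space. For $1\le i\le m-1$ it cites \cite{Generalised_Radon_inversion} for the cases $i=0$ and $i=m$, and then:
\begin{itemize}
\item it expands $\delta^i\D^i v_i$ in the frame tensors $e_\ell=\omega_1^{\odot\ell_1}\odot\cdots\odot\omega^{\odot\ell_n}$;
\item it uses Lemma~\ref{lemma: Radon transform of divergence operator} to turn $\delta^i$ into $\partial_p^i$ while raising the $\omega$-multiplicity by $i$;
\item it discards the components with $\ell_n\neq0$ via the kernel result, arriving at the formula \eqref{eq: inversion formula} for $\overline{\Rc}(\delta^i\D^i v_i)$;
\item it recovers $v_i$ by classical Radon inversion together with a cited uniqueness theorem for $\delta^i\D^i$ on solenoidal fields.
\end{itemize}

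\textbf{Your route.} You reduce the data to $\partial_p^i\Lc^{m-i}_{\ell_1\dots\ell_{n-1}}v_i$ and finish on the Fourier side. You read off the restriction of $\widehat{v_i}(\sigma\omega)$ to $(\omega^\perp)^{m-i}$, and you use that solenoidality makes $\widehat{v_i}(\xi)$ a tensor on $\xi^\perp$.

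\textbf{How they relate.} The two are closer than they look. For solenoidal $v_i$ one has $\widehat{\delta^i\D^i v_i}(\xi)=c\,|\xi|^{2i}\widehat{v_i}(\xi)$ with $c\neq0$. So the paper's extra $\partial_p^i$ followed by inversion of $\delta^i\D^i$ amounts to your single division by $\sigma^i$.

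\textbf{What your version buys.} It is self-contained: you check the vanishing of transforms with $\ell_n\ge1$ on solenoidal fields instead of citing it, and you need no uniqueness theorem for $\delta^i\D^i$. It treats $i=0$ and $i=m$ uniformly. It uses only that the $e_\ell$ span $S^{m-i}(\omega^\perp)$, so it is insensitive to normalization constants the paper suppresses:
\begin{itemize}
\item the $e_\ell$ are orthogonal but not of unit length, so the paper's expansion $\sum_\ell\langle\cdot,e_\ell\rangle e_\ell$ really needs multinomial weights;
\item each step of Lemma~\ref{lemma: potential tensor fields} carries a nonzero combinatorial factor.
\end{itemize}

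\textbf{What the paper's version buys.} It buys locality in $p$. Formula \eqref{eq: inversion formula} involves only $\partial_p^i$, so vanishing of the data on all hyperplanes meeting $U$ forces $\overline{\Rc}(\delta^i\D^i v_i)$ to vanish on those hyperplanes. This is exactly what the proof of Theorem~\ref{thm: UCP result} uses. Your inversion of $\partial_p^i$ by division by $\sigma^i$ is nonlocal in $p$, so it would not serve that purpose directly.
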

\begin{proof}[Proof of Theorem \ref{thm: Inversion}]
From the above Kernel Description Result in Theorem~\ref{thm: Kernel Characterization}, we know that for a fixed index $i$ and integers $\ell_j \ge 0$, $j = 1, \ldots, n-1$, satisfying
\[\ell_1 + \cdots + \ell_{n-1} = m - i,\]
the generalized Radon transform $\mathcal R^{m}_{\ell_1 \dots \ell_{n-1} i} f$ of a symmetric $m$-tensor field $f$ contains information only about the component $v_i$. Indeed, all remaining terms in the decomposition given by Theorem~\ref{thm: Decomposition Result} lie in the kernel of the transform (see Theorem~\ref{thm: Kernel Characterization} below).\\

\noindent The main idea behind this proof is to show that $v_i$ can be uniquely reconstructed from the data of $\mathcal R^{m}_{\ell_1 \dots \ell_{n-1} i} f$. This directly implies that if the data $\mathcal R^{m}_{\ell_1 \dots \ell_{n-1} i} f$ is available for all $0 \le i \le m$, then each component $v_i$, $0 \le i \le m$, can be uniquely recovered, which in turn yields the unique reconstruction of the tensor field $f$. In order to achieve our goal, we show that the componentwise Radon transform $\overline{\Rc}$ of $\delta^i \D^i v_i$ can be recovered from the given data. This further shows that $\delta^i \D^i v_i$ can be uniquely obtained using Radon inversion. Then, $v_i$ is known uniquely as shown in \cite[Theorem 1]{Generalised_Radon_inversion}.\\

\noindent For $i = 0$, it is shown in \cite[Theorem 4]{Generalised_Radon_inversion} that $v_0$ can be uniquely recovered from the longitudinal Radon transform data $\Rc_{\ell_1 \dots \ell_{n - 1} 0}^m f = \Lc_{\ell_1 \dots \ell_{n - 1}}^m f$. Similarly, the recovery of $\overline{\Rc} \left(\delta^m \D^m v_m\right)$, and hence $v_m$, is shown from the transverse Radon transform data $\Rc_{0 \dots 0 m}^m f = \Tc^m f$ in \cite[Theorem 5]{Generalised_Radon_inversion}. Hence, in this proof, we focus on recovering $v_i$ for $i = 1, \dots, m - 1$.\\

\noindent It is well known that the space of symmetric $m$-tensors on $\mathbb{R}^n$ has dimension $\binom{m+n-1}{m}$. Let $\{\omega, \omega_1, \omega_2, \ldots, \omega_{n-1}\}$ be an orthonormal frame in $\mathbb{R}^n$. Then, as shown in \cite[Lemma~6]{Microlocal_2018}, the tensors
\[\omega_1^{\odot \ell_1} \odot \omega_2^{\odot \ell_2} \odot \cdots \odot \omega_{n-1}^{\odot \ell_{n-1}} \odot \omega^{\odot \ell_n}, \qquad \ell_1 + \cdots + \ell_n = m,\]
are linearly independent. Since their cardinality equals the dimension of $S^m \mathbb{R}^n$, they form a basis. At any point $x \in \Rb^n$, we know that $\delta^i \D^i v_i (x)$ is a symmetric $(m - i)$-tensor in $\mathbb{R}^n$ and hence, we can write
\begin{align*}
\delta^i \D^i v_i (x) &= \sum_{\ell_1 + \dots + \ell_n = m - i} \left<\delta^i \D^i v_i (x), \o_1^{\odot \ell_1} \odot \o_2^{\odot \ell_2} \odot \dots \odot \o_{n - 1}^{\odot \ell_{n - 1}} \odot \o^{\odot \ell_n} \right>\\
& \hspace{7cm} \o_1^{\odot \ell_1} \odot \o_2^{\odot \ell_2} \odot \dots \odot \o_{n - 1}^{\odot \ell_{n - 1}} \odot \o^{\odot \ell_n}.
\end{align*}
\noindent So, the componentwise Radon transform $\overline{\Rc}$ of $\delta^i \D^i v_i$ can be written as follows:
\begin{align*}
\overline{\Rc} \left(\delta^i \D^i v_i\right) &= \sum_{\ell_1 + \dots + \ell_n = m - i} \left<\overline{\Rc} \left(\delta^i \D^i v_i\right), \o_1^{\odot \ell_1} \odot \o_2^{\odot \ell_2} \odot \dots \odot \o_{n - 1}^{\odot \ell_{n - 1}} \odot \o^{\odot \ell_n} \right>\\
& \hspace{7cm} \o_1^{\odot \ell_1} \odot \o_2^{\odot \ell_2} \odot \dots \odot \o_{n - 1}^{\odot \ell_{n - 1}} \odot \o^{\odot \ell_n}\\
&= \sum_{\ell_1 + \dots + \ell_{n - 1} = m - i} \left<\overline{\Rc} \left(\delta^i \D^i v_i\right), \o_1^{\odot \ell_1} \odot \o_2^{\odot \ell_2} \odot \dots \odot \o_{n - 1}^{\odot \ell_{n - 1}}\right>\\
& \hspace{7cm} \o_1^{\odot \ell_1} \odot \o_2^{\odot \ell_2} \odot \dots \odot \o_{n - 1}^{\odot \ell_{n - 1}}\\
&+ \sum_{\ell_1 + \dots + \ell_n = m - i \atop \ell_n \neq 0} \left<\overline{\Rc} \left(\delta^i \D^i v_i\right), \o_1^{\odot \ell_1} \odot \o_2^{\odot \ell_2} \odot \dots \odot \o_{n - 1}^{\odot \ell_{n - 1}} \odot \o^{\odot \ell_n} \right>\\
& \hspace{7cm} \o_1^{\odot \ell_1} \odot \o_2^{\odot \ell_2} \odot \dots \odot \o_{n - 1}^{\odot \ell_{n - 1}} \odot \o^{\odot \ell_n}.\\
\end{align*}
The summation above has been split into cases when $\ell_n = 0$ and when $\ell_n \neq 0$. Further, using Lemma \ref{lemma: Radon transform of divergence operator}, we get
\begin{align*}
\overline{\Rc} \left(\delta^i \D^i v_i\right) &= \sum_{\ell_1 + \dots + \ell_{n - 1} = m - i} \frac{\partial^i}{\partial p^i} \left<\overline{\Rc} \left(\D^i v_i\right), \o_1^{\odot \ell_1} \odot \o_2^{\odot \ell_2} \odot \dots \odot \o_{n - 1}^{\odot \ell_{n - 1}} \odot \o^{\odot i} \right>\\
& \hspace{7cm} \o_1^{\odot \ell_1} \odot \o_2^{\odot \ell_2} \odot \dots \odot \o_{n - 1}^{\odot \ell_{n - 1}}\\
&+ \sum_{\ell_1 + \dots + \ell_n = m - i \atop \ell_n \neq 0} \frac{\partial^i}{\partial p^i} \left<\overline{\Rc} \left(\D^i v_i\right), \o_1^{\odot \ell_1} \odot \o_2^{\odot \ell_2} \odot \dots \odot \o_{n - 1}^{\odot \ell_{n - 1}} \odot \o^{\odot \ell_n + i} \right>\\
& \hspace{7cm} \o_1^{\odot \ell_1} \odot \o_2^{\odot \ell_2} \odot \dots \odot \o_{n - 1}^{\odot \ell_{n - 1}} \odot \o^{\odot \ell_n}\\
&= \sum_{\ell_1 + \dots + \ell_{n - 1} = m - i} \left(\frac{\partial^i}{\partial p^i} \Rc_{\ell_1 \dots \ell_{n - 1} i}^{m} \left(\D^i v_i\right) \right)\o_1^{\odot \ell_1} \odot \o_2^{\odot \ell_2} \odot \dots \odot \o_{n - 1}^{\odot \ell_{n - 1}}\\
&+ \sum_{\ell_1 + \dots + \ell_{n} = m - i \atop \ell_n \neq 0} \left(\frac{\partial^i}{\partial p^i} \Rc_{\ell_1 \dots \ell_{n - 1} \ell_n + i}^{m} \left(\D^i v_i\right)\right) \o_1^{\odot \ell_1} \odot \o_2^{\odot \ell_2} \odot \dots \odot \o_{n - 1}^{\odot \ell_{n - 1}} \odot \o^{\odot i}.
\end{align*}
Further, using the known kernel of these transforms from Theorem \ref{thm: Kernel Characterization}, the second summation above becomes zero, and also,
\[\Rc_{\ell_1 \dots \ell_{n - 1} i}^{m} \left(\D^i v_i\right) = \Rc_{\ell_1 \dots \ell_{n - 1} i}^{m} \left(\sum_{i = 0}^m \D^i v_i\right) = \Rc_{\ell_1 \dots \ell_{n - 1} i}^{m} f.\]
Using this relation, we finally get
\begin{align} \label{eq: inversion formula}
\overline{\Rc} \left(\delta^i \D^i v_i\right) &= \sum_{\ell_1 + \dots + \ell_{n - 1} = m - i} \left(\frac{\partial^i}{\partial p^i} \Rc_{\ell_1 \dots \ell_{n - 1} i}^{m} f \right)\o_1^{\odot \ell_1} \odot \o_2^{\odot \ell_2} \odot \dots \odot \o_{n - 1}^{\odot \ell_{n - 1}}.
\end{align}
Since the generalized Radon transform $\Rc_{\ell_1 \dots \ell_{n - 1} i}^{m} f$ is known for all $\ell_j \geq 0$; $j = 1, \dots, n - 1$ with $\ell_1 + \dots \ell_{n - 1} + i = m$, the right hand side of the above equation is completely known. Then, using the inversion of the classical Radon transform and the uniqueness of the solution of the $\delta^i \D^i$ operator, we recover $v_i$ uniquely.
\end{proof}
\section{Stability and Range} \label{section: Stability and Range}
In this section, we derive a version of the Fourier slice theorem for our case. This result is later used to obtain an isometry result (Reshetnyak-type formula) and to describe the range of the generalized Radon transform. \\

\noindent Recall, for a function $f \in \mathcal{S}(\Rb^n)$, the classical Fourier slice theorem states
\begin{equation*}
{(\Rc f)}^\wedge (\o, p) = (2 \pi)^{\frac{n - 1}{2}} \widehat{f}(p\o); \quad p \in \Rb.
\end{equation*}
This result plays a central role in proving many classical results on injectivity, range characterization, and stability. We next derive an analogue of the Fourier slice theorem for the generalized Radon transform.

\begin{lemma} \label{lemma: Fourier slice}
For $f \in \mathcal{S} (\mathit{S}^m\Rb^n)$ and any $0 \leq \ell_1, \ell_2 \leq m$ such that $\ell_1 + \ell_2 = m$, we have
\begin{equation*}
    {(\Rc_{\ell_1 \ell_2}^m f)}^\wedge (\omega, p, u) = (2 \pi)^{\frac{n - 1}{2}} \left<\widehat{f}(p \omega), \omega^{\odot \ell_1} \odot u^{\odot \ell_2}\right>.
\end{equation*}
\end{lemma}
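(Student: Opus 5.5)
The plan is to reduce the statement to the classical Fourier slice theorem by linearity, since the weight tensor $\omega^{\odot \ell_1}\odot u^{\odot \ell_2}$ does not depend on the integration variable $x$. For fixed $(u,\omega)\in\widetilde{T}\Sb^{n-1}$ we write
\[
\left< f(x), \omega^{\odot \ell_1}\odot u^{\odot \ell_2}\right> = f_{i_1\dots i_m}(x)\,\left(\omega^{\odot \ell_1}\odot u^{\odot \ell_2}\right)^{i_1\dots i_m},
\]
which is a finite linear combination of the components $f_{i_1\dots i_m}\in\mathcal S(\Rb^n)$ with coefficients depending only on $(u,\omega)$. Consequently, directly from \eqref{eq:def of GRT},
\[
\Rc^m_{\ell_1\ell_2} f(\omega,p,u) = \left(\omega^{\odot \ell_1}\odot u^{\odot \ell_2}\right)^{i_1\dots i_m}\,\Rc\!\left(f_{i_1\dots i_m}\right)(\omega,p) = \left<\overline{\Rc} f(\omega,p), \omega^{\odot \ell_1}\odot u^{\odot \ell_2}\right>.
\]

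Next, we take the Fourier transform in $p$ (with $u,\omega$ frozen). Since this transform acts only on the real variable and the coefficients are constant in $p$, it commutes with the contraction. We then apply the classical Fourier slice theorem $(\Rc g)^\wedge(\omega,\sigma)=(2\pi)^{\frac{n-1}{2}}\widehat g(\sigma\omega)$ to each component $g=f_{i_1\dots i_m}$. Recombining the components, with $\widehat f$ understood componentwise, gives $(2\pi)^{\frac{n-1}{2}}\left<\widehat f(p\omega),\omega^{\odot\ell_1}\odot u^{\odot\ell_2}\right>$, which is exactly the claimed identity.

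There is no genuine obstacle. The only points needing care are the following. First, the interchange of the Fourier transform with the finite sum is trivial, and each $f_{i_1\dots i_m}$ is Schwartz, so the scalar slice theorem applies without any convergence issues. Second, the normalization of the Fourier transforms must match the one fixed in Section~\ref{section: def and notations}, which it does, since the paper quotes the slice theorem with that normalization. If one preferred a self-contained argument, the scalar slice theorem itself follows by writing $x=p\omega+y$ with $y\in\omega^\perp$ and using Fubini:
\[
\int_\Rb\int_{\omega^\perp} g(p\omega+y)\,e^{-ip\sigma}\,dy\,dp = \int_{\Rb^n} g(x)\,e^{-i\langle x,\sigma\omega\rangle}\,dx .
\]
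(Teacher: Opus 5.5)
Your proof is correct and follows essentially the same route as the paper: pull the $x$-independent weight $\omega^{\odot \ell_1}\odot u^{\odot \ell_2}$ out of the integrals, then use the change of variables $x = p\omega + y$ (equivalently, the scalar slice theorem applied componentwise) to identify the Fourier transform in $p$ with $(2\pi)^{\frac{n-1}{2}}\widehat f(p\omega)$. The only difference is cosmetic: the paper carries out this change of variables inline, whereas you cite the scalar slice theorem for each component and give its derivation only as an aside.
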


\begin{proof}
The argument is based on an adaptation of the classical Fourier slice theorem to the setting of generalized Radon transforms, and proceeds as follows.
    \begin{align*}
        {(\mathcal{R}_{\ell_1 \ell_2}^m f)}^\wedge (\omega, p, u) &= {(2 \pi)}^{-\frac{1}{2}}\int_\FR e^{-ips} \mathcal{R}_{\ell_1 \ell_2}^m f (\omega, s, u) \,ds\\
        &= {(2 \pi)}^{-\frac{1}{2}}\int_\FR e^{-ips} \left\{\int_{\omega^\perp} f_{i_1 \dots i_{\ell_1} j_1 \dots j_{\ell_2}} (s\omega + y) \omega^{i_1} \dots \omega^{i_{\ell_1}} u^{j_1} \dots u^{j_{\ell_2}} \,dy\right\} \,ds\\
        &= {(2 \pi)}^{-\frac{1}{2}} \omega^{i_1} \dots \omega^{i_{\ell_1}} u^{j_1} \dots u^{j_{\ell_2}} \int_\FR \int_{\omega^\perp} e^{-ips} f_{i_1 \dots i_{\ell_1} j_1 \dots j_{\ell_2}} (s\omega + y) \,dy \,ds.
    \end{align*}
    Taking $x = s \omega + y$, we get
    \begin{align*}
        {(\mathcal{R}_{\ell_1 \ell_2}^m f)}^\wedge (\omega, p, u) &= {(2 \pi)}^{-\frac{1}{2}} \omega^{i_1} \dots \omega^{i_{\ell_1}} u^{j_1} \dots u^{j_{\ell_2}} \int_{\FR^n} e^{-ip(x \cdot \omega)} f_{i_1 \dots i_{\ell_1} j_1 \dots j_{\ell_2}} (x) \,dx\\
        &= {(2 \pi)}^{-\frac{1}{2}} \omega^{i_1} \dots \omega^{i_{\ell_1}} u^{j_1} \dots u^{j_{\ell_2}} \int_{\FR^n} e^{-ix \cdot (p\omega)} f_{i_1 \dots i_{\ell_1} j_1 \dots j_{\ell_2}} (x) \,dx\\
        &= {(2 \pi)}^{\frac{n - 1}{2}} \left< \widehat{f} (p\omega), \omega^{\odot \ell_1} \odot u^{\odot \ell_2} \right>.
    \end{align*}
\end{proof}

\noindent We now introduce the weighted Sobolev spaces $H^s_{t,\ell_1\ell_2}(S^m\mathbb{R}^n)$ for symmetric $m$-tensor fields on $\mathbb{R}^n$. For such a tensor field
$f$, the norm is defined by
\[
\| f \|^2_{H^s_{t,\ell_1\ell_2}(S^m\mathbb{R}^n)}
:= \int_{\mathbb{R}^n} |y|^{2t}(1+|y|^2)^{s-t}
\int_{\mathbb{S}^{n-1}\cap y^\perp}
\left|
\big\langle \widehat f(y),
y^{\odot \ell_1}\odot v^{\odot \ell_2}
\big\rangle
\right|^2
\,dv\,dy.
\]
\noindent The following result shows that the GRT is an isometry over proper Sobolev spaces for symmetric $m$-tensor fields. An equality of a similar kind is established for the classical Radon transform acting on scalar functions in \cite[Theorem 2.1]{Sharafutdinov_Reshetnyak_formula}. 
\begin{theorem}
    For $s \in \Rb$, $t > -n/2$ and any tensor field $f \in \mathcal{S} (\mathit{S}^m\Rb^n)$, the following equality holds:
    \[{\lvert \lvert \mathcal{R}_{\ell_1 \ell_2}^m f \rvert \rvert}_{H^{s + m + \frac{n - 1}{2}}_{t + m + \frac{n - 1}{2}} (\Zc)} = || f ||_{H^s_{t, \ell_1\ell_2} (S^m \FR^n)}.\]
\end{theorem}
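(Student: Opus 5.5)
The plan is to reduce the identity to the Fourier slice formula of Lemma~\ref{lemma: Fourier slice} followed by a polar change of variables in $\Rb^n$, exactly as in the scalar Reshetnyak-type formula \cite[Theorem 2.1]{Sharafutdinov_Reshetnyak_formula}. Since $f\in\mathcal S(S^m\Rb^n)$, we have $\Rc^m_{\ell_1\ell_2}f\in\mathcal S(\Zc)$, so the norm on the left is given directly by the defining integral and no density argument is needed.

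\textbf{Step 1: insert the slice formula.} Writing out the left-hand side with $\sigma$ as the Fourier variable gives
\[
\frac{1}{2(2\pi)^{n-1}}\int_{\Sb^{n-1}}\int_{\Sb^{n-1}\cap\omega^\perp}\int_{\Rb}|\sigma|^{2t+2m+n-1}(1+\sigma^2)^{s-t}\,\bigl|(\Rc^m_{\ell_1\ell_2}f)^\wedge(\omega,\sigma,u)\bigr|^2\,d\sigma\,du\,d\omega .
\]
By Lemma~\ref{lemma: Fourier slice}, $|(\Rc^m_{\ell_1\ell_2}f)^\wedge|^2=(2\pi)^{n-1}\bigl|\langle\widehat f(\sigma\omega),\omega^{\odot\ell_1}\odot u^{\odot\ell_2}\rangle\bigr|^2$. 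The factor $(2\pi)^{n-1}$ therefore cancels against the normalizing constant, leaving an overall factor $\tfrac12$.

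\textbf{Step 2: homogenize in $\omega$.} Put $y=\sigma\omega$. By multilinearity,
\[
\langle\widehat f(y),\omega^{\odot\ell_1}\odot u^{\odot\ell_2}\rangle=\sigma^{-\ell_1}\langle\widehat f(y),y^{\odot\ell_1}\odot u^{\odot\ell_2}\rangle .
\]
Moreover, $u$ ranges over $\Sb^{n-1}\cap\omega^\perp=\Sb^{n-1}\cap y^\perp$, and this set does not depend on the sign of $\sigma$. So the inner $u$-integral becomes exactly the inner $v$-integral appearing in the norm of $H^s_{t,\ell_1\ell_2}$, multiplied by $|\sigma|^{-2\ell_1}$.

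\textbf{Step 3: polar coordinates.} The map $(\omega,\sigma)\mapsto\sigma\omega$ from $\Sb^{n-1}\times\Rb$ covers $\Rb^n\setminus\{0\}$ twice, and $dy=|\sigma|^{n-1}\,d\sigma\,d\omega$ on each sheet. The double cover absorbs the remaining factor $\tfrac12$. The power $|\sigma|^{n-1}$ coming from the weight $|\sigma|^{2(t+m+\frac{n-1}{2})}$ is consumed by the Jacobian. Also, $(1+\sigma^2)^{s-t}=(1+|y|^2)^{s-t}$ because the shift by $m+\frac{n-1}{2}$ is the same in the upper and lower indices. What remains is a power $|y|^{2t+2m-2\ell_1}=|y|^{2t+2\ell_2}$ multiplying $\bigl|\langle\widehat f(y),y^{\odot\ell_1}\odot v^{\odot\ell_2}\rangle\bigr|^2$.

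\textbf{Main obstacle: matching the powers of $|y|$.} The delicate point is this bookkeeping of powers of $|y|$, which must be reconciled with the weight $|y|^{2t}$ in the definition of $\|\cdot\|_{H^s_{t,\ell_1\ell_2}}$. I would first check whether the intended normalization uses the unit direction, i.e.\ $\langle\widehat f(y),\omega^{\odot\ell_1}\odot v^{\odot\ell_2}\rangle\,|y|^{m}$. Equivalently, one can compare with the scalar case $m=0$, which must reduce to \cite[Theorem 2.1]{Sharafutdinov_Reshetnyak_formula}, and track how the $+m$ shift in the Sobolev index compensates the homogeneity degree. If a discrepancy of $|y|^{2\ell_2}$ persists, the correct fix is to absorb it into the definition of the tensor norm, since the homogeneity computation in Step~2 is forced. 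The remaining measure-theoretic points (Fubini, the null set $y=0$, and the evenness $(\omega,\sigma)\to(-\omega,-\sigma)$ together with $u\mapsto$ the same fibre) are routine.
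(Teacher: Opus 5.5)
Your route is the same as the paper's: apply the slice lemma, substitute $y=\sigma\omega$, let the Jacobian absorb $|\sigma|^{n-1}$, and let the double cover absorb the factor $\tfrac12$. Your bookkeeping is right and the paper's is not, so your last paragraph should conclude rather than hedge. The leftover factor $|y|^{2\ell_2}$ is genuine, and the identity as printed is false whenever $\ell_2\ge 1$.

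Since $u$ is a \emph{unit} vector in $\omega^\perp=y^\perp$, the slice lemma gives
\[
\bigl|(\Rc^m_{\ell_1\ell_2}f)^\wedge(\omega,\sigma,u)\bigr|^2=(2\pi)^{n-1}|y|^{-2\ell_1}\bigl|\langle\widehat f(y),y^{\odot\ell_1}\odot u^{\odot\ell_2}\rangle\bigr|^2,\qquad y=\sigma\omega .
\]
Hence the squared left-hand side equals
\[
\int_{\Rb^n}|y|^{2t+2\ell_2}(1+|y|^2)^{s-t}\int_{\Sb^{n-1}\cap y^\perp}\bigl|\langle\widehat f(y),y^{\odot\ell_1}\odot v^{\odot\ell_2}\rangle\bigr|^2\,dv\,dy .
\]
The paper reaches the weight $|y|^{2t}$ only because its last step evaluates the transform at $u=v/|y|$. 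That point is not on the unit sphere of $y^\perp$, and this rescaling is exactly what produces the spurious factor $|y|^{-2\ell_2}$. Taking $v$ on the sphere of radius $|y|$ instead would change $dv$ by $|y|^{n-2}$, and the result would no longer match the definition of $\|\cdot\|_{H^s_{t,\ell_1\ell_2}}$.

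A concrete counterexample: take $f=\phi\,e_1^{\odot m}$ with $0\neq\widehat\phi\in C_c^\infty(\{|y|>1\})$. Then the right-hand side is positive, and the left-hand side is strictly larger. Your proposed sanity check against the scalar case cannot detect this, since for $m=0$ (indeed whenever $\ell_2=0$) the discrepancy equals $1$.

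What your argument actually proves, and what you should state, is the corrected identity
\[
\|\Rc^m_{\ell_1\ell_2}f\|_{H^{s+\ell_1+\frac{n-1}{2}}_{t+\ell_1+\frac{n-1}{2}}(\Zc)}=\|f\|_{H^s_{t,\ell_1\ell_2}(S^m\Rb^n)} .
\]
Equivalently, keep the original shift $m+\frac{n-1}{2}$ and use the weight $|y|^{2t+2\ell_2}$ in the tensor norm. Your normalization $|y|^m\langle\widehat f(y),(y/|y|)^{\odot\ell_1}\odot v^{\odot\ell_2}\rangle$ amounts to the same thing. The printed statement holds exactly in the transverse case $\ell_2=0$.
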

\begin{proof} Consider 
    \begin{align*}
        &{\lvert \lvert \mathcal{R}_{\ell_1 \ell_2}^m f \rvert \rvert}_{H^{s + m + \frac{n - 1}{2}}_{t + m + \frac{n - 1}{2}} (\Zc)}\\
        &\qquad= \frac{1}{2(2\pi)^{n-1}} \int_{\Sb^{n-1} \cap \omega^\perp} \int_{\Sb^{n-1}}\int_{-\infty}^\infty |s|^{2 t + 2 m + n - 1} (1 + |s|^2)^{s - t} |\widehat{\mathcal{R}_{\ell_1 \ell_2}^m f}(\o, s, u)|^2 \,ds\, d\o \,du\\
        &\qquad= \frac{1}{(2\pi)^{n-1}} \int_{\Sb^{n-1} \cap \omega^\perp} \int_{\Sb^{n-1}}\int_{0}^\infty |s|^{2 t + 2 m + n - 1} (1 + |s|^2)^{s - t} |\widehat{\mathcal{R}_{\ell_1 \ell_2}^m f}(\o, s, u)|^2 \,ds\, d\o \,du.
    \end{align*}
    Taking $y = s\o$, we get 
    \begin{align*}
        {\lvert \lvert \mathcal{R}_{\ell_1 \ell_2}^m f \rvert \rvert}_{H^{s + m + \frac{n - 1}{2}}_{t + m + \frac{n - 1}{2}} (\Zc)} &= \frac{1}{(2\pi)^{n-1}} \int_{\Sb^{n-1} \cap y^\perp} \int_{\Rb^n} |y|^{2 t + 2 m} (1 + |y|^2)^{s - t} \left\lvert\widehat{\mathcal{R}_{\ell_1 \ell_2}^m f}\left(\frac{y}{|y|}, |y|, \frac{v}{|y|}\right)\right\rvert^2 \,dy \,dv.
    \end{align*}
    Using the Fourier slice theorem for the generalized Radon transform, we get
    \begin{align*}
        {\lvert \lvert \mathcal{R}_{\ell_1 \ell_2}^m f \rvert \rvert}_{H^{s + m + \frac{n - 1}{2}}_{t + m + \frac{n - 1}{2}} (\Zc)} &= \int_{\Sb^{n-1} \cap y^\perp} \int_{\Rb^n} |y|^{2 t} (1 + |y|^2)^{s - t} \left\lvert\left<\widehat{f}(y), y^{\odot \ell_1} \odot v^{\odot \ell_2}\right>\right\rvert^2 \,dy \,dv\\ &= || f ||_{H^s_{t, \ell_1\ell_2} (S^m \FR^n)}.
    \end{align*}
\end{proof}
\noindent Next, we discuss the range characterization for the generalized Radon transform. We begin by recalling the classical range characterization for the Radon
transform acting on scalar functions. This result provides a natural point of comparison for the generalized transforms considered in this work.

\begin{theorem}[Classical range characterization of the Radon transform {\cite{Helgason_1999}}]
Let $g \in \mathcal{S}(\mathbb{S}^{n-1}\times\mathbb{R})$. Then $g$ lies in the range of the classical Radon transform $\mathcal{R}$ acting on
$\mathcal{S}(\mathbb{R}^n)$ if and only if the following conditions hold:
\begin{enumerate}
\item $g(\omega,p)$ is even, i.e.,
\[
g(-\omega,-p) = g(\omega,p),
\]
\item for each $k = 0, 1, 2, \dots$, the moment
\[
\int_{\mathbb{R}} p^k g(\omega,p)\, dp = P_k(\o)
\]
is a homogeneous polynomial of degree $k$ in $\o$.
\end{enumerate}
\end{theorem}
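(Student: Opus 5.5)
We prove the two implications separately, using the Fourier slice theorem in its scalar form ${(\Rc f)}^\wedge(\o,\sigma)=(2\pi)^{\frac{n-1}{2}}\widehat f(\sigma\o)$ as the main tool.

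\emph{Necessity.} Let $g=\Rc f$ with $f\in\Sc(\Rb^n)$. Evenness follows from the definition: $H_{-\o,-p}=H_{\o,p}$ as sets, and the surface measure is the same, so $g(-\o,-p)=g(\o,p)$. For the moment conditions, write the hyperplane integral in the form $\int_{\o^\perp} f(p\o+y)\,dy$ and apply Fubini:
\[
\int_{\Rb} p^k g(\o,p)\,dp=\int_{\Rb^n}\langle x,\o\rangle^k f(x)\,dx .
\]
Expanding $\langle x,\o\rangle^k=\big(\sum_i x_i\o_i\big)^k$ with the multinomial formula gives a homogeneous polynomial of degree $k$ in $\o$. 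Its coefficients are the finite moments $\int x^\alpha f\,dx$ with $|\alpha|=k$, which exist because $f$ is Schwartz.

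\emph{Sufficiency.} Suppose $g$ satisfies both conditions. Define $F$ on $\Rb^n$ by
\[
F(\sigma\o):=(2\pi)^{-\frac{n-1}{2}}\,\widehat g(\o,\sigma).
\]
This is well defined: $(\o,\sigma)$ and $(-\o,-\sigma)$ describe the same point, and evenness of $g$ gives $\widehat g(-\o,-\sigma)=\widehat g(\o,\sigma)$. We then set $f:=\mathcal F^{-1}F$. The scalar slice theorem gives $\Rc f=g$, because the Fourier transform in $p$ is injective. It therefore remains to show $F\in\Sc(\Rb^n)$.

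Away from the origin this is easy. Polar coordinates $(\o,\sigma)\mapsto\sigma\o$ form a diffeomorphism there, and $\widehat g$ is Schwartz in $\sigma$ uniformly in $\o$, together with all derivatives. So $F$ is smooth on $\Rb^n\setminus\{0\}$ and rapidly decreasing with all derivatives; derivatives in Cartesian coordinates become polar derivatives multiplied by factors bounded for $|y|\ge 1$.

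\textbf{Main obstacle: smoothness of $F$ at the origin.} Here the moment condition enters. Taylor-expand in $\sigma$:
\[
\partial_\sigma^k\widehat g(\o,0)=(2\pi)^{-1/2}(-i)^k P_k(\o).
\]
Hence the degree-$k$ Taylor term of $F(\sigma\o)$ is a constant multiple of $\sigma^kP_k(\o)=P_k(\sigma\o)$, a genuine homogeneous polynomial in $y=\sigma\o$. For each $N$ we write
\[
F(y)=\sum_{k<N}c_kP_k(y)+R_N(y),
\]
where the remainder satisfies $R_N(\sigma\o)=\sigma^N h_N(\o,\sigma)$ with $h_N$ smooth. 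We must then check that $R_N$ is of class $C^{N-j}$ near $0$, with $\partial^\alpha R_N=O(|y|^{N-|\alpha|})$. This comes from the chain rule: each Cartesian derivative costs at most one factor $|y|^{-1}$ (derivatives in $\o$ come with a factor $1/\sigma$). Since $N$ is arbitrary, $F$ is $C^\infty$ at $0$. This is the delicate, Helgason-style step; the rest of the argument is routine.
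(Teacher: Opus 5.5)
Your proof is correct. It is the standard argument from Helgason: Fourier slice, prescribe $\widehat f$ along rays by $\widehat g$, then a finite Taylor expansion in $\sigma$ with the moments supplying the homogeneous polynomial terms. The paper gives no proof of this classical statement; it only cites \cite{Helgason_1999}, so there is no in-paper argument that differs from yours.

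The nearest in-paper analogue is the sufficiency part of Theorem~\ref{thm: Range characterization}. It has the same Fourier-slice skeleton, but it gets the polynomial structure differently: it expands $e^{-ip\sigma}$ in its full power series, integrates term by term, and then defers the Schwartz property to Helgason. That termwise step is only legitimate when $\widehat g(\omega,\cdot)$ is entire, for instance when $g$ has compact support in $p$. For general Schwartz data it fails. Take $\widehat g(\omega,\sigma)=\chi(\sigma)$ with $\chi\in C_c^\infty$ even and supported in $1<|\sigma|<2$. All moments vanish, so the series returns $0$, yet $g\neq 0$, and $g$ is in the range. Your expansion up to order $N$ with integral remainder $\sigma^N h_N(\omega,\sigma)$ avoids this. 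Combined with the observation that each Cartesian derivative costs at most one factor $|y|^{-1}$, it is the right mechanism for smoothness at the origin, and it is what the paper's citation of Helgason implicitly relies on.

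Two small points to tighten:
\begin{itemize}
\item Evenness only makes $F$ well defined on $\Rb^n\setminus\{0\}$. At $y=0$ you also need $\widehat g(\omega,0)$ to be independent of $\omega$. This is the $k=0$ moment condition ($P_0$ is a constant), and you should say so explicitly rather than leave it hidden in the Taylor step.
\item ``$C^{N-j}$'' should read $C^{N-1}$. The passage from the bounds $\partial^\alpha R_N(y)=O(|y|^{N-|\alpha|})$ on $\Rb^n\setminus\{0\}$ to $R_N\in C^{N-1}(\Rb^n)$ deserves one line. Derivatives of order at most $N-1$ extend continuously by $0$ to the origin. The fundamental theorem of calculus along coordinate segments then shows these extensions are the actual partial derivatives at $0$.
\end{itemize}
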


\noindent The next result shows that an analogous description holds for the generalized Radon transforms studied here. In particular, the conditions appearing in the classical case admit natural generalizations that reflect the additional geometric structure of the transform. We now state the corresponding range characterization for the generalized Radon transforms.

\begin{theorem}[Range characterization for the generalized Radon transforms] \label{thm: Range characterization}
    For any $g \in \mathcal{S}(\Zc)$ and $0 \leq \ell_1, \ell_2 \leq m$ with $\ell_1 + \ell_2 = m$, there exists $f \in \mathcal{S}(\mathit{S}^m\Rb^n)$ with $\Rc_{\ell_1\ell_2}^m f = g$ if and if the conditions given below are satisfied:
    \begin{enumerate}
        \item $g(\omega,p)$ is even/odd if $m$ is even/odd, i.e.,
        \[
        g(-\omega,-p,-u) = {(-1)}^m g(\omega,p, u),
        \]
        \item for $k = 0, 1, 2, \dots$, the moment
        \[
        \int_{\mathbb{R}} p^k g(\omega,p,u)\, dp = P_{k + \ell_1, \ell_2}(\o, u)
        \]
        is a polynomial which is homogeneous in $\o$ with degree $k + \ell_1$ and in $u$ with degree $\ell_2$.
    \end{enumerate}
\end{theorem}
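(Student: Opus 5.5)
The plan is to prove necessity by direct computation from the definition \eqref{eq:def of GRT}, and sufficiency by following the classical Helgason argument on the Fourier side, using Lemma~\ref{lemma: Fourier slice} in place of the classical Fourier slice theorem.

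\emph{Necessity.} Let $g=\Rc^m_{\ell_1\ell_2}f$. Since $H_{-\omega,-p}=H_{\omega,p}$ and $(-\omega)^{\odot\ell_1}\odot(-u)^{\odot\ell_2}=(-1)^m\,\omega^{\odot\ell_1}\odot u^{\odot\ell_2}$, we get $g(-\omega,-p,-u)=(-1)^m g(\omega,p,u)$. For the moments, I use Fubini with $x=p\omega+y$, $y\in\omega^\perp$:
\[
\int_{\Rb}p^k g(\omega,p,u)\,dp=\int_{\Rb^n}\langle x,\omega\rangle^k f_{i_1\dots i_m}(x)\,dx\;\omega^{i_1}\cdots\omega^{i_{\ell_1}}u^{i_{\ell_1+1}}\cdots u^{i_m}.
\]
Expanding $\langle x,\omega\rangle^k=x^{j_1}\cdots x^{j_k}\omega_{j_1}\cdots\omega_{j_k}$, the integral becomes a finite sum of moments of $f$ (which converge because $f$ is Schwartz) multiplied by monomials in $\omega$ and $u$. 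This is a polynomial homogeneous of degree $k+\ell_1$ in $\omega$ and of degree $\ell_2$ in $u$.

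\emph{Sufficiency, step 1: the candidate Fourier transform.} Given $g$ satisfying conditions 1 and 2, define for $\xi=\sigma\omega\neq0$ and $u\in\Sb^{n-1}\cap\xi^\perp$
\[
G(\xi,u):=(2\pi)^{-\frac{n-1}{2}}\,\widehat g(\omega,\sigma,u).
\]
By condition 1 this does not depend on which representation $\xi=\sigma\omega=(-\sigma)(-\omega)$ is used: passing to $(-\omega,-\sigma,-u)$ changes $\widehat g$ by the factor $(-1)^m$, and this is exactly the factor by which the test tensor $\omega^{\odot\ell_1}\odot u^{\odot\ell_2}$ changes. Lemma~\ref{lemma: Fourier slice} says that we need a Schwartz symmetric tensor field $F$ with
\[
\langle F(\xi),\omega^{\odot\ell_1}\odot u^{\odot\ell_2}\rangle=G(\xi,u),
\]
and then $f=\mathcal F^{-1}F$ solves $\Rc^m_{\ell_1\ell_2}f=g$.

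\emph{Sufficiency, step 2: constructing $F$ pointwise.} Fix $\xi$ and use the orthogonal splitting $S^m\Rb^n=\bigoplus_j\omega^{\odot j}\odot S^{m-j}(\omega^\perp)$. I look for $F(\xi)$ of the form $c\,\omega^{\odot\ell_1}\odot B(\xi)$ with $B(\xi)\in S^{\ell_2}(\omega^\perp)$. Then $\langle F(\xi),\omega^{\odot\ell_1}\odot u^{\odot\ell_2}\rangle$ is, up to a combinatorial constant, equal to $\langle B(\xi),u^{\odot\ell_2}\rangle$. So it suffices that $u\mapsto G(\xi,u)$ be the restriction to the unit sphere of $\omega^\perp$ of a homogeneous polynomial of degree $\ell_2$. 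If it is, $B(\xi)$ is uniquely determined by that polynomial and depends linearly and smoothly on $G$.

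\emph{Main obstacle: polynomiality in $u$ for every $\sigma$, and smoothness at $\xi=0$.} Condition 2 gives degree-$\ell_2$ polynomiality in $u$ only for the moments, that is, for the Taylor coefficients of $\widehat g$ at $\sigma=0$:
\[
\partial_\sigma^k\widehat g(\omega,0,u)=(2\pi)^{-\frac12}(-i)^kP_{k+\ell_1,\ell_2}(\omega,u).
\]
I will need to show that this forces the $u$-dependence of $\widehat g(\omega,\sigma,\cdot)$ to lie in the finite-dimensional space of degree-$\ell_2$ polynomials for all $\sigma$. If only the Taylor data are controlled, I would first check whether the condition has to be read with the $u$-structure imposed for every $\sigma$, and would add this reading explicitly if necessary.

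Smoothness of $F$ at the origin is then handled as in Helgason's proof. The $k$-th Taylor coefficient of $\widehat g$ in $\sigma$ is $\sigma^kP_{k+\ell_1,\ell_2}(\omega,u)$. Since $P_{k+\ell_1,\ell_2}$ is homogeneous of degree $k+\ell_1$ in $\omega$, this coefficient rewrites as $\sigma^{-\ell_1}P_{k+\ell_1,\ell_2}(\xi,u)$, which is polynomial in $\xi$ once the factor $\omega^{\odot\ell_1}=\xi^{\odot\ell_1}/\sigma^{\ell_1}$ is absorbed into $F$. Hence $B$ has a Taylor expansion at $\xi=0$ that is polynomial in $\xi$, so $B$, and therefore $F$, is $C^\infty$ there. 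Rapid decay of $F$ and of its derivatives follows from $g\in\Sc(\Zc)$. Finally $f:=\mathcal F^{-1}F\in\Sc(S^m\Rb^n)$, and Lemma~\ref{lemma: Fourier slice} together with injectivity of the one-dimensional Fourier transform in $p$ gives $\Rc^m_{\ell_1\ell_2}f=g$.
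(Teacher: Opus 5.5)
Your route is the paper's: the Fourier slice lemma plus a Helgason-type construction on the Fourier side. Your necessity argument is fine. Sufficiency has genuine gaps, and the paper's sketch does not close them either. It expands $e^{-ip\sigma}$ termwise, which is not legitimate for Schwartz $g$, and then simply asserts that the series equals $\langle\widehat f(p\omega),\omega^{\odot\ell_1}\odot u^{\odot\ell_2}\rangle$.

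\textbf{First gap: the $u$-polynomiality cannot be derived.} The step you flag cannot be proved, so your hedge is the right call. Condition 2 only sees the Taylor coefficients of $\widehat g$ at $\sigma=0$. Take $\psi$ even and Schwartz with $\widehat\psi$ vanishing near $0$, and $\phi$ with the parity demanded by condition 1 but not the restriction of a degree-$\ell_2$ form in $u$. Then $g=\phi(\omega,u)\psi(p)$ has all moments zero and satisfies conditions 1 and 2. It is not in the range, because $u\mapsto\Rc^m_{\ell_1\ell_2}f(\omega,p,u)=\langle\int_{H_{\omega,p}}f,\omega^{\odot\ell_1}\odot u^{\odot\ell_2}\rangle$ always is such a form.

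\textbf{Second gap: smoothness at $\xi=0$.} Up to constants, the $k$-th Taylor term of $F=c\,\omega^{\odot\ell_1}\odot B$ is $|\xi|^{-2\ell_1}\xi^{\odot\ell_1}\odot\Pi_\omega\tilde P_k(\xi)$. Here $\Pi_\omega$ is the projection onto $S^{\ell_2}(\omega^\perp)$ and $\tilde P_k(\xi)$ is the tensor of $u\mapsto P_{k+\ell_1,\ell_2}(\xi,u)$. Writing $\omega^{\odot\ell_1}=\sigma^{-\ell_1}\xi^{\odot\ell_1}$ adds a second factor $\sigma^{-\ell_1}$ rather than cancelling the first. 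Moreover, $\Pi_\omega$ is itself homogeneous of degree $0$ in $\xi$. This already fails on genuine range elements. For $m=1$ your construction gives $F(\xi)=\omega\langle\widehat f(\xi),\omega\rangle$ when $\ell_1=1$, and $F=\Pi_\omega\widehat f$ when $\ell_1=0$. Both are discontinuous at $0$ whenever $\int f\,dx\neq0$, so $\mathcal F^{-1}F$ is not Schwartz.

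\textbf{What a repair needs, and why the statement fails without it.} The data fix only the $\omega^{\odot\ell_1}\odot S^{\ell_2}(\omega^\perp)$-component of $\widehat f(\xi)$, so the other components must be used to get smoothness. For instance, Borel-construct a smooth $\tilde F$ with Taylor coefficients $A_k(\xi^{\otimes k})$. The tensors $A_k\in S^k\Rb^n\otimes S^m\Rb^n$ must satisfy $\langle A_k\omega^{\otimes k},\omega^{\odot\ell_1}\odot u^{\odot\ell_2}\rangle=P_{k+\ell_1,\ell_2}(\omega,u)$ on orthonormal pairs. Then put only the remainder, which is flat at $\sigma=0$, into your summand. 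However, condition 2 does not guarantee that such $A_k$ exist; in the necessity direction they are $\int x^{\otimes k}\otimes f\,dx$.

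Without them, sufficiency is false even with your extra $u$-hypothesis. Take $m=2$, $\ell_1=\ell_2=1$, and $g=(\omega^1u^2-\omega^2u^1)\psi(p)$, with $\psi$ even Schwartz and $\widehat\psi\equiv(2\pi)^{-1/2}$ near $0$. This $g$ is linear in $u$ for every $p$ and satisfies conditions 1 and 2: its zeroth moment has bidegree $(1,1)$ and all higher moments vanish. But for every $f$, the zeroth moment of $\Rc^2_{1,1}f$ is $\langle\int f\,dx,\omega\odot u\rangle$. This is unchanged under $\omega\leftrightarrow u$ on orthonormal pairs, while $\omega^1u^2-\omega^2u^1$ changes sign and equals $1$ at $(e_1,e_2)$.

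So the statement itself needs both the pointwise $u$-condition and this tensorial form of the moment condition. With both in place, the Borel-plus-flat-remainder construction completes your argument.
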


\begin{proof}
    We prove that the two conditions stated above are necessary and sufficient for a function $g \in \mathcal{S}(\Zc)$ to lie in the range of the generalized Radon transform. We begin with the proof of necessity; that is, we show that for any fixed $0 \leq \ell_1, \ell_2 \leq m$ with $\ell_1 + \ell_2 = m$ the generalized Radon transforms $\Rc_{\ell_1\ell_2}^m f$ satisfy the above conditions.
    \begin{align*}
        \Rc_{\ell_1\ell_2}^m f (-\o, -p, -u) = \int_{\omega^\perp} \left< f (p\omega + y), {(-\o)}^{\odot \ell_1} \odot {(-u)}^{\odot \ell_2} \right> \,dy = {(-1)}^m \Rc_{\ell_1\ell_2}^m f (\o, p, u).
    \end{align*}
    Further, for any $k = 0, 1, 2, \dots$
    \begin{align*}
        \int_{\mathbb{R}} p^k \Rc_{\ell_1\ell_2}^m f (\o, p, u)\,dp &= \int_{\mathbb{R}} p^k \left(\int_{\omega^\perp} \left< f (p\omega + y), \o^{\odot \ell_1} \odot u^{\odot \ell_2} \right> \,dy\right) \,dp\\
        &= \int_{\mathbb{R}} \int_{\omega^\perp} p^k \left< f (p\omega + y), \o^{\odot \ell_1} \odot u^{\odot \ell_2} \right> \,dy \,dp
    \end{align*}
    Taking $p\omega + y = x$, we get
    \begin{align*}
        \int_{\mathbb{R}} p^k \Rc_{\ell_1\ell_2}^m f (\o, p, u)\,dp &= \int_{\mathbb{R}^n} {\left<x, \o\right>}^k \left< f (x), \o^{\odot \ell_1} \odot u^{\odot \ell_2} \right> \,dx
    \end{align*}
    which is a homogeneous polynomial as required. Now we show that given a function $g \in \mathcal{S}(\Zc)$ that satisfies the two conditions mentioned above, there exists a tensor field $f \in \mathcal{S}(\mathit{S}^m\Rb^n)$ such that $\Rc_{\ell_1\ell_2}^m f = g$. If we prove that there exists $f \in \mathcal{S}(\mathit{S}^m\Rb^n)$ such that 
    \begin{equation} \label{eq: Fourier side range characterization}
        \left<\widehat{f}(p \omega), \omega^{\odot \ell_1} \odot u^{\odot \ell_2}\right> = (2 \pi)^{\frac{1 - n}{2}} \widehat{g}(\o, p, u),
    \end{equation}
    then using Lemma \ref{lemma: Fourier slice} for that tensor field $f$, we get
    \[{(\Rc_{\ell_1 \ell_2}^m f)}^\wedge (\omega, p, u) = (2 \pi)^{\frac{n - 1}{2}} \left<\widehat{f}(p \omega), \omega^{\odot \ell_1} \odot u^{\odot \ell_2}\right>.\]
    This implies
    \[{(\Rc_{\ell_1 \ell_2}^m f)}^\wedge (\omega, p, u) = \widehat{g}(\o, p, u).\]
    Using the injectivity of the Fourier transform, this implies $\Rc_{\ell_1\ell_2}^m f = g$. Now we proceed to prove equation \eqref{eq: Fourier side range characterization}. On the right-hand side, we have
    \begin{align*}
        (2 \pi)^{\frac{1 - n}{2}} \widehat{g}(\o, p, u) = (2 \pi)^{-\frac{n}{2}} \int_{\mathbb{R}} g(\omega,\sigma, u)\, e^{- i p \sigma}\, d\sigma.
    \end{align*}
    Using the expansion 
    \[e^{- i p \sigma} = \sum_{\alpha = 0}^\infty \frac{{(- i p \sigma)}^\alpha}{\alpha !},\]
    we get
    \begin{align*}
        (2 \pi)^{\frac{1 - n}{2}} \widehat{g}(\o, p, u) &= (2 \pi)^{-\frac{n}{2}} \sum_{\alpha = 0}^\infty \frac{{(- i p )}^\alpha}{\alpha !} \int_{\mathbb{R}} \sigma^\alpha g(\omega,\sigma, u)\, d\sigma\\
        &= (2 \pi)^{-\frac{n}{2}} \sum_{\alpha = 0}^\infty \frac{{(- i p )}^\alpha}{\alpha !} P_{\alpha + \ell_1, \ell_2}(\o, u)
    \end{align*}
    using the given condition. Hence, the above expression can be written as $$(2 \pi)^{\frac{1 - n}{2}} \widehat{g}(\o, p, u) = \left<\widehat{f}(p \omega), \omega^{\odot \ell_1} \odot u^{\odot \ell_2}\right>$$
    for some $f \in C^\infty(\mathit{S}^m\Rb^n)$. To complete the proof, we can show that $\widehat{f} \in \mathcal{S}(\mathit{S}^m\Rb^n)$ using the similar arguments as in \cite{Helgason_1999}, which further implies that $f \in \mathcal{S}(\mathit{S}^m\Rb^n)$.
\end{proof}

\section{Unique Continuation Properties}\label{section: UCP}

This section establishes the uniqueness and non-uniqueness results for the transforms introduced above. The analysis is motivated by corresponding results for the classical Radon transform (see \cite{UCP_d-plane}). The main results of this section are stated below:

\begin{theorem} \label{thm: non-uniqueness}
    Suppose $n \geq 2$ is an odd integer and let $U \subset \Rb^n$ be a bounded open set. For any $i \in \{0, 1, \dots, m\}$, there exists a symmetric $m$-tensor field $f \in C_c^\infty(\mathit{S}^m\Rb^n)$ such that
    \[v_i|_U = 0 \quad \text{ and } \quad \Rc^m_{\ell_1 \dots \ell_{n - 1} i} f = 0; \quad \ell_1 + \dots + \ell_{n - 1} + i = m\]
    for all hyperplanes intersecting $U$, but $v_i \not\equiv 0$. Note that $v_i$ here comes from the decomposition result stated in Theorem \ref{thm: Decomposition Result}. This further implies that there exists a non-trivial symmetric $m$-tensor field $f \in C_c^\infty(\mathit{S}^m\Rb^n)$ such that 
    \[f|_U = 0 \quad \text{ and } \quad \Rc^m_{\ell_1 \dots \ell_{n}} f = 0; \quad \ell_1 + \dots + \ell_{n} = m\]
    for all hyperplanes passing through $U$.
\end{theorem}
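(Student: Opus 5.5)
The plan is to reduce everything to the scalar non-uniqueness phenomenon for the classical Radon transform in odd dimensions, as in \cite{UCP_d-plane}. For odd $n$ the inversion formula $\varphi = c_n\,\Delta^{(n-1)/2}\mathcal{R}^*\mathcal{R}\varphi$ is local. This is what yields a nontrivial $\varphi\in C_c^\infty(\mathbb{R}^n)$ with
\[
\varphi|_U=0 \qquad\text{and}\qquad \mathcal{R}\varphi(\omega,p)=0 \ \text{ for every hyperplane } H_{\omega,p} \text{ meeting } U.
\]
I will take this scalar function as given from \cite{UCP_d-plane}. The whole argument then consists of building from $\varphi$, by differential operators only, a tensor field whose $i$-th component in Theorem~\ref{thm: Decomposition Result} is the only nonzero one.

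Fix $i\in\{0,\dots,m\}$ and set $k=m-i$. If $k=0$, take $v_i=\varphi$, a scalar. If $k\ge 1$, I need a compactly supported \emph{solenoidal} symmetric $k$-tensor field built from $\varphi$. For this, introduce the first-order operator $A=e_1\partial_2-e_2\partial_1$ with components $A_1=\partial_2$, $A_2=-\partial_1$, and $A_j=0$ for $j\ge3$. Define
\[
(v_i)_{j_1\dots j_k}=A_{j_1}A_{j_2}\cdots A_{j_k}\varphi .
\]
This tensor is symmetric because the $A_j$ are commuting constant-coefficient operators. It is divergence-free because $\partial_{j}A_{j}=\partial_1\partial_2-\partial_2\partial_1=0$. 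It has compact support and vanishes on $U$, since it is a derivative of $\varphi$.

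It is also not identically zero. Its component $(v_i)_{1\dots1}=\partial_2^k\varphi$ has Fourier transform $(iy_2)^k\widehat\varphi(y)$, which is nonzero because $\widehat\varphi$ is a nonzero analytic function. Now set $f=\D^i v_i\in C_c^\infty(S^m\mathbb{R}^n)$. By uniqueness in Theorem~\ref{thm: Decomposition Result}, the $i$-th component of $f$ is exactly $v_i$ and all other components vanish. Hence $v_i|_U=0$ and $v_i\not\equiv0$, as required.

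Next I compute the transforms, using Lemma~\ref{lemma: potential tensor fields}. It gives $\Rc^m_{\ell_1\dots\ell_{n-1} i}f=\partial_p^{\,i}\,\Rc^{k}_{\ell_1\dots\ell_{n-1}0}v_i$. The transform $\Rc^{k}_{\ell_1\dots\ell_{n-1}0}v_i$ is the integral over $H_{\omega,p}$ of a fixed constant-coefficient combination of derivatives $\partial^\alpha\varphi$. By repeated use of \eqref{eq:property of Radon transform}, each such term equals a polynomial in $\omega$ times $\partial_p^{|\alpha|}\mathcal{R}\varphi$.

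The set of $(\omega,p)$ for which $H_{\omega,p}$ meets $U$ is open, because $U$ is open. On that set $\mathcal{R}\varphi$ vanishes identically, and so do all of its $p$-derivatives. Therefore every $\Rc^m_{\ell_1\dots\ell_n}f$ vanishes on hyperplanes meeting $U$:
\begin{itemize}
\item For $\ell_n=i$, this is the computation just done.
\item For $\ell_n<i$, the transform is zero by Lemma~\ref{lemma: potential tensor fields}.
\item For $\ell_n>i$, Lemma~\ref{lemma: potential tensor fields} together with the solenoidal vanishing from the proof of Theorem~\ref{thm: Kernel Characterization} gives zero.
\end{itemize}
Alternatively, the same derivative argument applies directly in every case. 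Since $f=\D^iv_i$ is itself a derivative of $\varphi$, we get $f|_U=0$. Also $f\not\equiv0$, for otherwise uniqueness in the decomposition would force $v_i=0$. This gives the second assertion of the theorem.

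The main obstacle I expect is the scalar input: producing $\varphi\neq0$ vanishing on $U$ whose Radon transform vanishes on \emph{all} hyperplanes meeting $U$. I would import it from \cite{UCP_d-plane}. If a direct construction were needed, I would first try the following. Pick even $g\in C_c^\infty(\mathbb{S}^{n-1}\times\mathbb{R})$ supported away from the open set of hyperplanes meeting $U$, with $g=\partial_p^{N}h$ for large $N$ so that the moment conditions hold (after projecting to the range). Then set $\varphi=c_n\Delta^{(n-1)/2}\mathcal{R}^*g$, and use the locality of this operator in odd $n$ to get $\varphi|_U=0$.
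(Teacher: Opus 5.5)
Your proof is correct, and it takes a genuinely different route from the paper's. The paper works at the level of the tensor transform. It prescribes data $g(\omega,p,u)=h(p)$, with $h$ even (odd if $m$ is odd) and supported in $1<|p|<a$, and uses Theorem~\ref{thm: Range characterization} to obtain $f$ with $\Rc^m_{\ell_1\dots\ell_{n-1}i}f=g$. It then appeals to the inversion formula to get $v_i|_U=0$.

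You instead use only the scalar counterexample $\varphi$ and build the tensor field from it with constant-coefficient differential operators: $(v_i)_{j_1\dots j_k}=A_{j_1}\cdots A_{j_k}\varphi$, where $A=e_1\partial_2-e_2\partial_1$ is divergence-free, and $f=\D^i v_i$. Three things then follow easily:
\begin{itemize}
\item uniqueness in Theorem~\ref{thm: Decomposition Result} identifies the decomposition of $f$ exactly;
\item locality is automatic, because $v_i$ and $f$ are derivatives of $\varphi$;
\item every $\Rc^m_{\ell_1\dots\ell_n}f$, for every frame and every multi-index, vanishes on hyperplanes meeting $U$, by $\mathcal{R}(\partial^\alpha\varphi)=\omega^\alpha\partial_p^{|\alpha|}\mathcal{R}\varphi$ and the openness of that set of hyperplanes.
\end{itemize}

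What your route buys is robustness, because it bypasses two delicate steps in the paper's argument.
\begin{itemize}
\item It avoids the range theorem. The moment conditions there are not met by $g=h(p)$ for every parity of the indices: for $m$ even with $\ell_1,\ell_2$ both odd, they force all moments of $h$ to vanish. The theorem is also stated for the two-index transform, not the frame-indexed one.
\item It avoids the step $\delta^i\D^i v_i\mapsto v_i$ implicit in ``using the known inversion''. That step is nonlocal: on solenoidal fields, $\delta^i\D^i$ is a constant multiple of $\Delta^i$. So vanishing of $\delta^i\D^i v_i$ on $U$ would not by itself give $v_i|_U=0$.
\end{itemize}
The paper's route, if completed, would give a counterexample for the tensor transform itself with explicitly prescribed radial data.

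One minor point: your fallback construction of $\varphi$ is loose. Making finitely many moments vanish via $g=\partial_p^N h$ does not place $g$ in the range. The clean choice is $g(\omega,p)=h(p)$, with $h$ even and supported in $r<|p|<a$, where $U\subset B(0,r)$. Its moments are $c_k|\omega|^k$ (the odd ones vanish), so $g=\mathcal{R}\varphi$ for some $\varphi\in C_c^\infty(\Rb^n)$ by Helgason's range and support theorems. Then $\varphi=0$ on $B(0,r)$ by locality of the odd-dimensional inversion formula. Since the scalar result is standard and the paper itself cites it, importing it as you do is fine.
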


\begin{theorem} \label{thm: UCP result}
    Assume that $n \geq 2$ is even and $f \in C_c^\infty(\mathit{S}^m\Rb^n)$. Suppose $U \subset \Rb^n$ is bounded open set and for any fixed $i \in \{0, 1, \dots, m\}$, $v_i$ of $f$ (as in the decomposition result from Theorem \ref{thm: Decomposition Result}) vanishes on $U$. If the generalized Radon transform $\Rc^m_{\ell_1 \dots + \ell_{n - 1} i} f$; $\ell_1 + \dots + \ell_{n - 1} + i = m$ also vanishes for all hyperplanes intersecting $U$, then we have
    \[v_i \equiv 0, \quad \text{that is,} \quad f = \sum_{j = 1 \atop j \neq i}^m \D^j v_j.\]
    Consequently, if we have 
    \[f|_U = 0 \quad \text{ and } \Rc^m_{\ell_1 \dots \ell_{n}} f|_U = 0; \quad \ell_1 + \dots + \ell_{n} = m\]
    then this implies $f \equiv 0$.
\end{theorem}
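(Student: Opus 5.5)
The plan is to reduce Theorem~\ref{thm: UCP result} to the unique continuation principle for the classical scalar Radon transform in even dimensions from \cite{UCP_d-plane}. That principle says: if $n$ is even, $h$ is a (compactly supported or suitably decaying) function with $h|_U=0$, and $\Rc h$ vanishes on all hyperplanes meeting $U$, then $h\equiv 0$. The natural scalar objects are the components of $\delta^i\D^i v_i$. Their componentwise Radon transforms are given explicitly by the inversion formula \eqref{eq: inversion formula} in terms of $\Rc^m_{\ell_1\dots\ell_{n-1}i}f$.

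The steps, in order, are as follows.

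\textbf{Step 1: transfer the data.} Fix $i$. By \eqref{eq: inversion formula}, for every $(\omega,p)$,
\[
\overline{\Rc}\left(\delta^i\D^iv_i\right)(\omega,p)=\sum_{\ell_1+\dots+\ell_{n-1}=m-i}\frac{\partial^i}{\partial p^i}\Rc^m_{\ell_1\dots\ell_{n-1}i}f(\omega,p)\,\omega_1^{\odot\ell_1}\odot\dots\odot\omega_{n-1}^{\odot\ell_{n-1}}.
\]
The set of $(\omega,p)$ with $H_{\omega,p}\cap U\neq\emptyset$ is open, since $U$ is open. The hypothesis says each $\Rc^m_{\ell_1\dots\ell_{n-1}i}f$ vanishes on this open set, so its $p$-derivatives vanish there as well. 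Hence every component of $\overline{\Rc}(\delta^i\D^iv_i)$ vanishes on all hyperplanes meeting $U$.

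\textbf{Step 2: apply scalar UCP.} Since $v_i|_U=0$ and $U$ is open, $\delta^i\D^iv_i$ also vanishes on $U$, being a differential expression in $v_i$. I would apply the even-dimensional UCP componentwise to conclude $\delta^i\D^iv_i\equiv 0$.

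\textbf{Step 3: injectivity of $\delta^i\D^i$.} In the relevant $H^{s+i}_{t+i}$ class, $\delta^i\D^i$ is injective; this is the uniqueness statement from \cite[Theorem 1]{Generalised_Radon_inversion} already invoked in Theorem~\ref{thm: Inversion}. Therefore $v_i\equiv0$, and $f=\sum_{j\ne i}\D^jv_j$ follows from Theorem~\ref{thm: Decomposition Result}.

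\textbf{Step 4: the consequence.} Suppose $f|_U=0$ and all transforms vanish on hyperplanes meeting $U$. I must then show $v_i|_U=0$ for every $i$, which is not immediate from $f|_U=0$. I would sidestep this. Repeating Step 1 with the full basis expansion shows that $\overline{\Rc}f$ is determined by the collection $\Rc^m_{\ell_1\dots\ell_n}f$: the basis $\omega_1^{\odot\ell_1}\odot\dots\odot\omega^{\odot\ell_n}$ is the one from \cite[Lemma~6]{Microlocal_2018}, and I would expand $f(x)$ in it with coefficients $\langle f(x),\cdot\rangle$ and integrate over $H_{\omega,p}$. Each component of $\overline{\Rc}f$ then vanishes on hyperplanes meeting $U$, and the components of $f$ vanish on $U$. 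Scalar UCP gives $f\equiv0$ directly.

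\textbf{Main obstacle: regularity in Step 2.} The UCP of \cite{UCP_d-plane} typically requires compact support or specific decay, but $\delta^i\D^iv_i$ is generally not compactly supported even though $f\in C_c^\infty$: the decomposition is nonlocal. I would try first to verify that $\delta^i\D^iv_i$ is expressible from $f$ by a Fourier multiplier of order zero, namely a projection applied to $f$. It is then smooth and decays like $|x|^{-n}$, or at least lies in the function class (e.g.\ $L^2$-type, or the distributional setting) covered by the UCP theorem used in \cite{UCP_d-plane}. If needed, I would handle the even-dimensional inversion locally, using the local nature of the Radon inversion formula when $n$ is odd versus its nonlocal Hilbert-transform structure when $n$ is even, exactly as in the scalar argument.
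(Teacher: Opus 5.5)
Your Steps 1--3 are the paper's argument for the main claim: the inversion formula \eqref{eq: inversion formula}, then the even-dimensional scalar UCP, then injectivity of $\delta^i\D^i$. Your remark that, for fixed $\omega$, the admissible $p$ form an open set (so the $p$-derivatives of the data also vanish) fills in a detail the paper skips.

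Your Step 4 is a genuinely different route, and it is the one that works. The paper obtains the consequence by asserting $f|_U=0\Rightarrow v_i|_U=0$ for every $i$ and then applying the first part. That implication is false, because the decomposition is nonlocal. Take $m=1$ and $f=\D\phi$ with $\phi\in C_c^\infty(\Rb^n)$, $\phi\equiv 1$ on $U$. Then $v_0=0$ and $v_1=\phi$, so $f|_U=0$ while $v_1|_U=1$. Your argument never touches the decomposition. At each $(\omega,p)$ the family $\Rc^m_{\ell_1\dots\ell_n}f(\omega,p)$ determines $\overline{\Rc}f(\omega,p)$. (The frame tensors are mutually orthogonal but not unit, so divide each coefficient by the squared norm of the corresponding tensor.) Hence every component of $f$ is a $C_c^\infty$ function vanishing on $U$ whose Radon transform vanishes on all hyperplanes meeting $U$. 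The scalar UCP then applies directly, in exactly the class where it is proved.

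Where your proposal is not yet a proof is Step 2, and the paper has the same hole. It applies the scalar UCP to $\delta^i\D^iv_i$ without checking that this non-compactly supported field lies in the class covered by the cited theorem. Your description of the field should also be sharpened. It is $\D^iv_i$ that is the order-zero projection of $f$. By contrast, $\delta^i\D^iv_i$ is exactly the solenoidal part of the compactly supported $(m-i)$-tensor field $\delta^if$, and its symbol is homogeneous of degree $i$. This gives what you need: each component $h$ is smooth, lies in every $H^r(\Rb^n)$, has $\widehat h$ bounded near $0$, and satisfies $|h(x)|\le C(1+|x|)^{-n}$.

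For such $h$, rerun the normal-operator proof of the scalar UCP rather than quoting a compact-support statement. Set $\Rc^*g(x)=\int_{\Sb^{n-1}}g(\omega,\langle x,\omega\rangle)\,d\omega$. For $x\in U$ you get $0=\Rc^*\Rc h(x)=|\Sb^{n-2}|\int_{\Rb^n}h(y)|x-y|^{-1}\,dy$, which converges absolutely thanks to the decay, and this convolution equals $c\,\mathcal{F}^{-1}(|\xi|^{1-n}\widehat h)$. Apply the differential operator $(-\Delta)^{n/2}$, which is local precisely because $n$ is even; this gives $(-\Delta)^{1/2}h|_U=0$. Together with $h|_U=0$, the unique continuation property of $(-\Delta)^{1/2}$ on $H^r(\Rb^n)$ (Ghosh--Salo--Uhlmann) yields $h\equiv0$.

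This should replace your closing sentence about handling the inversion locally. The even-dimensional mechanism is the nonlocality of $\Rc^*\Rc=c\,(-\Delta)^{-(n-1)/2}$, not any localized inversion formula.
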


The above two theorems show that uniqueness holds in even dimensions but fails in odd dimensions under the same geometric conditions on the data. We next present a counterexample illustrating the non-uniqueness in odd dimensions and then prove the uniqueness result in the even-dimensional case. The counterexample for the odd-dimensional case is motivated by the counterexample given to prove the non-uniqueness of the classical Radon transform for $n$ odd in Theorem 2.3 in \cite{UCP_d-plane}.

\begin{proof}[Proof of Theorem \ref{thm: non-uniqueness}]
    Let $U$ be a bounded and open subset of $\Rb^n$ and $n \geq 2$ be any fixed odd number. Also, let $i$ be any integer belonging to the set $\{0, 1, \dots, m\}.$ In this proof, we aim to give an example of a non-trival symmetric $m$-tensor field $f \in C_c^\infty(\mathit{S}^m\Rb^n)$ such that $v_i$ of $f$ (as in the decomposition result \ref{thm: Decomposition Result}) vanishes on $U$ and for some $\ell_1, \dots, \ell_{n - 1} \geq 0$ with $\ell_1 + \dots + \ell_{n - 1} + i = m$, we have 
    \[\Rc^m_{\ell_1 \dots \ell_{n - 1} i} f|_U = 0.\]
    Let $m$ be an even integer and $h \in C^\infty(\Rb)$ be a non-trivial function such that
    \begin{itemize}
        \item $h$ is even,
        \item supp$(h) \subset (-a, -1) \cup (1, a)$ for some $a > 1$.
    \end{itemize}
    If $m$ is odd, we just assume $h$ to be odd and the rest of the computation follows. Taking $g(\o, p, u) = h(p)$, it follows from the range characterization of the generalized Radon transform proved in Theorem \ref{thm: Range characterization}, that there exists a symmetric $m$-tensor field $f$ such that 
    \[\Rc_{\ell_1 \dots \ell_{n - 1} i}^m f = g.\]
    Using the known inversion for the generalized Radon transform, we see that 
    \[v_i|_U = 0 \quad \text{ and } \quad \Rc^m_{\ell_1 \dots \ell_{n - 1} i} f|_U = 0\]
    but $f \not\equiv 0$ and hence $v_i \not\equiv 0$.
\end{proof}

\begin{proof}[Proof of Theorem \ref{thm: UCP result}]
    For $f \in C_c^\infty(\mathit{S}^m\Rb^n)$ and any $i \in \{0, 1, \dots, m\}$, if we have 
    \[\Rc^m_{\ell_1 \dots + \ell_{n - 1} i} f|_U = 0\]
    for all $\ell_j \geq 0$; $j = 1, \dots, n - 1$ with $\ell_1 + \dots + \ell_{n - 1} + i = m$, then using the inversion result \eqref{eq: inversion formula}, we get 
    \[\overline{\Rc} \left(\delta^i \D^i v_i\right)|_U = 0.\]
    We also have $v_i|_U = 0$, hence using the UCP result for the classical Radon transform derived in Theorem 2.7 of \cite{UCP_d-plane}, we get
    \[\delta^i \D^i v_i \equiv 0.\]
    Finally, using the uniqueness of the solution of operator $\delta^i \D^i$ proved in \cite[Theorem  3]{Generalised_Radon_inversion}, we have 
    \[v_i \equiv 0.\]
    The above analysis works for all $0 \leq i \leq m$. Now, 
    \[f|_U = 0 \quad \implies \quad v_i|_U = 0; \quad \forall i = 0, 1, \dots, m.\]
    If we have further
    \[\Rc^m_{\ell_1 \dots \ell_{n}} f|_U = 0; \quad \forall \ell_j \geq 0; \quad  j = 1, \dots, n \quad \text{with} \quad \ell_1 + \dots + \ell_n = m.\]
    Then we get
    \[v_i \equiv 0 \quad \forall i = 0, 1, \dots, m, \quad \text{that is} \quad f \equiv 0.\]
    This completes the proof of uniqueness in this case.
\end{proof}
\section{Acknowledgements}\label{sec: acknowledge} CT was supported by the Prime Minister's Research Fellowship from the Government of India.
\section*{Declarations}
\textbf{Conflict of interest:} The authors declare no conflict of interest.
\bibliographystyle{amsplain}
\bibliography{reference}

\end{document}